%%%Free Involutive Hom-associative algebras and PBW Theorem
%================================================================
\documentclass[12pt]{amsart}
\usepackage{}
\usepackage{amsmath}
\usepackage{amsfonts}
\usepackage{amssymb}
\usepackage{bbding}
\usepackage{stmaryrd}
\usepackage{txfonts}
\usepackage{graphicx}
\usepackage{epsfig}
\usepackage{xypic}
\usepackage{tikz}
\usepackage{longtable}
\usepackage[all]{xy}
% Optional PGF libraries
\usepackage{pgflibraryarrows}
\usepackage{pgflibrarysnakes}
\usepackage[shortlabels]{enumitem}
\usepackage{ifpdf}
\ifpdf
  \usepackage[colorlinks,final,backref=page,hyperindex]{hyperref}
\else
  \usepackage[colorlinks,final,backref=page,hyperindex,hypertex]{hyperref}
\fi

%\usepackage{stmaryrd}
%\usepackage{txfonts}
%\usepackage{eucal}
%\usepackage{graphicx}
%\usepackage{amscd}
%\usepackage[all]{xy}           %xypic macro for latex2.09
%\usepackage[active]{srcltx} %SRC Specials for DVI Searching
%\usepackage{epsfig}
%\usepackage{float}
%\usepackage{mathabx}
%\usepackage{fancybox}
%\usepackage{multicol}
%\usepackage{colordvi}
%\usepackage{xspace}
%\usepackage{latexsym}
%\usepackage{axodraw}
%\usepackage{stmaryrd}
%\usepackage[hypertex]{hyperref} %put this package last

%======================================================================
    %was    1, 1.5 for double sp
%======================================================================
%%standard setting
%\topmargin -0.3truein \textheight 8.4truein
%\oddsidemargin 0.2truein
%\evensidemargin 0.2truein \textwidth 440pt
%======================================================================
%%little larger standard setting: good setting
\topmargin -.8cm \textheight 22.8cm \oddsidemargin 0cm \evensidemargin -0cm \textwidth 16.3cm
%========================================================================================%%wide
%%lower setting for 1920x1080
%%\topmargin -.9cm \textheight 21cm \oddsidemargin 0cm \evensidemargin -0cm \textwidth 16.3cm
%%%%%%%%%%%%%%
%=======================================================
%%lower setting for 2048x1152
%\topmargin -.8cm \textheight 22.4cm \oddsidemargin 0cm \evensidemargin -0cm \textwidth 16.3cm
%========================================================================================%%wide

%=======================================================================
%%wide note setting, no margin
%\topmargin -1.6cm \textheight 25cm \oddsidemargin -0.9cm
%\textwidth 19cm \evensidemargin -0.9cm
%======================================================================
%%print narrow note setting
%\topmargin -0.5truein \textheight 9.8truein
%\oddsidemargin -0.7truein \evensidemargin -0.7truein
%\textwidth 340pt
%======================================================================
%\makeatletter

%%%%%%%%%%%%%%%%%%%%%%%% Statements
\newtheorem{theorem}{Theorem}[section]
\newtheorem{lemma}[theorem]{Lemma}

\newtheorem{prop}[theorem]{Proposition}
\theoremstyle{definition}
\newtheorem{defn}[theorem]{Definition}

%==========================================================================

\newcommand{\nc}{\newcommand}
\newcommand{\delete}[1]{}

%==========================================================================

\nc{\tred}[1]{\textcolor{red}{#1}}
\nc{\tblue}[1]{\textcolor{blue}{#1}} \nc{\tgreen}[1]{\textcolor{green}{#1}} \nc{\tpurple}[1]{\textcolor{purple}{#1}} \nc{\btred}[1]{\textcolor{red}{\bf #1}} \nc{\btblue}[1]{\textcolor{blue}{\bf #1}} \nc{\btgreen}[1]{\textcolor{green}{\bf #1}} \nc{\btpurple}[1]{\textcolor{purple}{\bf #1}}

\renewcommand{\Bbb}{\mathbb}

%=========================================================================

\newcommand{\efootnote}[1]{}
%========================================================================

%%\newcommand\wyscc[1]{}
\renewcommand{\textbf}[1]{}
%========================================================================

%\delete{
\nc{\mlabel}[1]{\label{#1}}  % Use this to suppress names
\nc{\mcite}[2][]{\cite[#1]{#2}}  % Use this to suppress names
\nc{\mref}[1]{\ref{#1}}  % Use this to suppress names
\nc{\mbibitem}[1]{\bibitem{#1}} % Use this to show number
%}

\delete{
\nc{\mlabel}[1]{\label{#1}  % Use the next two lines to show names
{\hfill \hspace{1cm}{\bf{{\ }\hfill(#1)}}}}
\nc{\mcite}[2][1]{\cite[#1]{#2}{{\bf{{\ }(#2; #1)}}}}  % Use this lines to show names
\nc{\mref}[1]{\ref{#1}{{\bf{{\ }(#1)}}}}  % Use this lines to show names
\nc{\mbibitem}[1]{\bibitem[\bf #1]{#1}} % Use this to show name
}

%%%%%%% Redefine inequality signs
\renewcommand\geq{\geqslant}
\renewcommand\leq{\leqslant}

\renewcommand\bar[1]{\overline{#1}}
%\renewcommand\tilde[1]{\widetilde{#1}}
%\newcommand\text[1]{{\rm #1}}

%=========================================================================

%%%%%%%%%%%%%%%%%%%% new symbols

\nc{\into}{I}
\nc{\rbw}{\mathfrak{R}} \nc{\brp}{\mathrm{brp}} \nc{\lead}{\mathrm{Lead}} \nc{\Id}{\mathrm{Id}} \nc{\Irr}{\mathrm{Irr}}
\nc{\vx}{\sigma} \nc{\vy}{\tau} \nc{\dvx}{\sigma^{(1)}} \nc{\dvy}{\tau^{(1)}} \nc{\done}{\vep} \nc{\mcitep}[1]{\mcite{#1}} \nc{\wt}{\mathrm{wt}} \nc{\bre}[1]{|#1|} \nc{\mapmonoid}{\frakM} \nc{\disjoint}{\frakM'}
\nc{\ncpoly}[1]{\langle #1\rangle}  %for noncommutative polynomials
\nc{\mapm}[1]{\lfloor\!|{#1}|\!\rfloor}
%for operated polynomials
\nc{\diff}[1]{{}^\NC\{ #1 \}} \nc{\disj}[1]{\{{#1}\}'} \nc{\mdisj}[1]{\frakM'(#1)} \nc{\brho}{\bar{\rho}} \nc{\om}{\bar{\frakm}} \nc{\frakn}{\mathfrak n} \nc{\ddeg}[1]{^{(#1)}} \nc{\opset}{X} \nc{\genset}{{Z}} \nc{\NC}{\mathrm{{NC}}} \nc{\leaf}{\mathrm{leaf}} \nc{\twig}{\mathrm{twig}} \nc{\fe}{\mathrm{fl}} \nc{\munderline}[1]{#1} \nc{\bo}{o} \nc{\dep}{\mathrm{depth}} \nc{\ofe}{\mathrm{ofl}} \nc{\dfe}{\mathrm{dfe}} \nc{\fex}{\mathrm{fex}} \nc{\dl}{\mathrm{dlex}} \nc{\db}{\mathrm{db}} \nc{\lex}{\mathrm{lex}} \nc{\clex}{\mathrm{clex}} \nc{\dgp}{\mathrm{dgp}} \nc{\dgx}{\mathrm{dgx}} \nc{\br}{\mathrm{br}} \nc{\obd}{\mathrm{odb}} \nc{\ob}{\mathrm{ob}}
\nc{\pie}{\mathrm{PIE}}
\nc{\rbo}{\mathrm{RBO}}
\nc{\supp}{\mathcal{S}}
\nc{\nul}{\mathcal{Z}}
\nc{\ind}{\mathrm{ind}}
%%%%%%%%%%%%%%%%%%%%%%%

%%%%%%%%%%%%%%%%%%%%%%% symbols
\nc{\bin}[2]{ (_{\stackrel{\scs{#1}}{\scs{#2}}})}  %binomial coeff
\nc{\binc}[2]{ \left (\!\! \begin{array}{c} \scs{#1}\\
    \scs{#2} \end{array}\!\! \right )}  %binomial coeff
\nc{\bincc}[2]{  \left ( {\scs{#1} \atop
    \vspace{-1cm}\scs{#2}} \right )}  %binomial coeff
\nc{\bs}{\bar{S}} \nc{\cosum}{\sqsubset} \nc{\la}{\longrightarrow} \nc{\rar}{\rightarrow} \nc{\dar}{\downarrow} \nc{\dprod}{**} \nc{\dap}[1]{\downarrow \rlap{$\scriptstyle{#1}$}} \nc{\md}[1]{\bar{#1}} \nc{\uap}[1]{\uparrow \rlap{$\scriptstyle{#1}$}} \nc{\defeq}{\stackrel{\rm def}{=}} \nc{\disp}[1]{\displaystyle{#1}} \nc{\dotcup}{\ \displaystyle{\bigcup^\bullet}\ } \nc{\gzeta}{\bar{\zeta}} \nc{\hcm}{\ \hat{,}\ } \nc{\hts}{\hat{\otimes}} \nc{\barot}{{\otimes}} \nc{\free}[1]{\bar{#1}} \nc{\uni}[1]{\tilde{#1}} \nc{\hcirc}{\hat{\circ}} \nc{\leng}{\ell} \nc{\lleft}{[} \nc{\lright}{]} \nc{\lc}{\lfloor} \nc{\rc}{\rfloor}
\nc{\lb}{[} %left bracket
\nc{\rb}{]} %right bracket
\nc{\curlyl}{\left \{ \begin{array}{c} {} \\ {} \end{array}
    \right.  \!\!\!\!\!\!\!}
\nc{\curlyr}{ \!\!\!\!\!\!\!
    \left. \begin{array}{c} {} \\ {} \end{array}
    \right \} }
\nc{\longmid}{\left | \begin{array}{c} {} \\ {} \end{array}
    \right. \!\!\!\!\!\!\!}
\nc{\onetree}{\bullet} \nc{\ora}[1]{\stackrel{#1}{\rar}}
\nc{\ola}[1]{\stackrel{#1}{\la}}%${\Bbb Z}$
\nc{\ot}{\otimes} \nc{\mot}{{{\boxtimes\,}}} \nc{\otm}{\overline{\boxtimes}} \nc{\sprod}{\bullet} \nc{\scs}[1]{\scriptstyle{#1}} \nc{\mrm}[1]{{\rm #1}} \nc{\msum}{\sum\limits}
\nc{\margin}[1]{\marginpar{\rm #1}}   %{\rm #1}}
\nc{\dirlim}{\displaystyle{\lim_{\longrightarrow}}\,} \nc{\invlim}{\displaystyle{\lim_{\longleftarrow}}\,} \nc{\mvp}{\vspace{0.3cm}} \nc{\tk}{^{(k)}} \nc{\tp}{^\prime} \nc{\ttp}{^{\prime\prime}} \nc{\svp}{\vspace{2cm}} \nc{\vp}{\vspace{8cm}} \nc{\proofbegin}{\noindent{\bf Proof: }}
%\nc{\proofbegin}{\begin{proof}} % AMS command
\nc{\proofend}{$\blacksquare$ \vspace{0.3cm}}
%\nc{\proofend}{\end{proof}} %AMS command
\nc{\modg}[1]{\!<\!\!{#1}\!\!>}
%\nc{\intg}[1]{\lceil{#1}\rceil}  %old free int ring
\nc{\intg}[1]{F_C(#1)} \nc{\lmodg}{\!<\!\!} \nc{\rmodg}{\!\!>\!} \nc{\cpi}{\widehat{\Pi}}
%\nc{\sha}{\scs{\mbox{\cyr X}}} %used to be \cyr
\nc{\sha}{{\mbox{\cyr X}}}  %used to be \cyr
\nc{\shap}{{\mbox{\cyrs X}}} %sha as product
\nc{\shpr}{\diamond}    %Shuffle product
\nc{\shp}{\ast} \nc{\shplus}{\shpr^+}
\nc{\shprc}{\shpr_c}    %Cartier's product
\nc{\msh}{\ast} \nc{\zprod}{m_0} \nc{\oprod}{m_1} \nc{\vep}{\varepsilon} \nc{\labs}{\mid\!} \nc{\rabs}{\!\mid}
\nc{\astarrow}{\overset{\raisebox{-3pt}{$\ast$}}{\rightarrow}}
%==========================================================================

%==========================================================================
%%%%%%%%%%%%%%%%%%%% roman fonts, in alphabetic order
\nc{\dth}{d} \nc{\mmbox}[1]{\mbox{\ #1\ }} \nc{\fp}{\mrm{FP}} \nc{\rchar}{\mrm{char}} \nc{\Fil}{\mrm{Fil}} \nc{\Mor}{Mor\xspace} \nc{\gmzvs}{gMZV\xspace} \nc{\gmzv}{gMZV\xspace} \nc{\mzv}{MZV\xspace} \nc{\mzvs}{MZVs\xspace} \nc{\Hom}{\mrm{Hom}} \nc{\id}{\mrm{id}} \nc{\im}{\mrm{im}} \nc{\incl}{\mrm{incl}} \nc{\map}{\mrm{Map}} \nc{\mchar}{\rm char} \nc{\nz}{\rm NZ}

%=======================================================================
%%%%%%%%%%%%%%%%%% bold face
\nc{\Alg}{\mathbf{Alg}} \nc{\Bax}{\mathbf{Bax}} \nc{\bff}{\mathbf f} \nc{\bfk}{{\bf k}} \nc{\bfone}{{\bf 1}} \nc{\bfx}{\mathbf x} \nc{\bfy}{\mathbf y}
\nc{\base}[1]{\bfone^{\otimes ({#1}+1)}} %{{a_{#1}}}
\nc{\Cat}{\mathbf{Cat}} \delete{}
%\nc{\cat}{\sqsubset}
\nc{\detail}{\marginpar{\bf More detail}
    \noindent{\bf Need more detail!}
    \svp}
\nc{\Int}{\mathbf{Int}} \nc{\Mon}{\mathbf{Mon}}
%\nc{\remark}{\noindent{\bf Remark: }}
\nc{\rbtm}{{shuffle }} \nc{\rbto}{{Rota-Baxter }} \nc{\remarks}{\noindent{\bf Remarks: }} \nc{\Rings}{\mathbf{Rings}} \nc{\Sets}{\mathbf{Sets}}

%=======================================================================
%%%%%%%%%%%%%%%%%%%Bbb fonts
\nc{\BA}{{\Bbb A}} \nc{\CC}{{\Bbb C}} \nc{\DD}{{\Bbb D}} \nc{\EE}{{\Bbb E}} \nc{\FF}{{\Bbb F}} \nc{\GG}{{\Bbb G}} \nc{\HH}{{\Bbb H}} \nc{\LL}{{\Bbb L}} \nc{\NN}{{\Bbb N}} \nc{\KK}{{\Bbb K}} \nc{\QQ}{{\Bbb Q}} \nc{\RR}{{\Bbb R}} \nc{\TT}{{\Bbb T}} \nc{\VV}{{\Bbb V}} \nc{\ZZ}{{\Bbb Z}}

%==========================================================================
%%%%%%%%%%%%%%%%%%% cal fonts

\nc{\cala}{{\mathcal A}} \nc{\calc}{{\mathcal C}} \nc{\cald}{{\mathcal D}} \nc{\cale}{{\mathcal E}} \nc{\calf}{{\mathcal F}} \nc{\calg}{{\mathcal G}} \nc{\calh}{{\mathcal H}} \nc{\cali}{{\mathcal I}} \nc{\call}{{\mathcal L}} \nc{\calm}{{\mathcal M}} \nc{\caln}{{\mathcal N}} \nc{\calo}{{\mathcal O}} \nc{\calp}{{\mathcal P}} \nc{\calr}{{\mathcal R}} \nc{\cals}{{\mathcal S}} \nc{\calt}{{\mathcal T}} \nc{\calw}{{\mathcal W}} \nc{\calk}{{\mathcal K}} \nc{\calx}{{\mathcal X}}
\nc{\calz}{{\mathcal Z}}
 \nc{\CA}{\mathcal{A}}

%==========================================================================
%%%%%%%%%%%%%%%%%%  frak fonts
\nc{\fraka}{{\mathfrak a}} \nc{\frakA}{{\mathfrak A}} \nc{\frakb}{{\mathfrak b}} \nc{\frakB}{{\mathfrak B}}
\nc{\frakc}{{\mathfrak c}}  \nc{\frakD}{{\mathfrak D}}
\nc{\frakd}{{\mathfrak d}}
\nc{\fraku}{{\mathfrak u}}
\nc{\frakv}{{\mathfrak v}}\nc{\frakw}{{\mathfrak w}}
\nc{\frakH}{{\mathfrak H}}
\nc{\frakr}{{\mathfrak r}}
\nc{\frakh}{{\mathfrak h}} \nc{\frakM}{{\mathfrak M}}
\nc{\frakO}{{\mathfrak O}}
\nc{\frako}{{\mathfrak o}}
\nc{\frakE}{{\mathfrak E}}
\nc{\frake}{{\mathfrak e}}
\nc{\bfrakM}{\overline{\frakM}} \nc{\frakm}{{\mathfrak m}} \nc{\frakP}{{\mathfrak P}} \nc{\frakN}{{\mathfrak N}} \nc{\frakp}{{\mathfrak p}} \nc{\frakS}{{\mathfrak S}}
\nc{\frakk}{{\mathfrak k}}
\nc{\frakx}{{\mathfrak x}}
\nc{\frakl}{{\mathfrak l}} \nc{\ox}{\bar{\frakx}} \nc{\frakX}{{\mathfrak X}} \nc{\fraky}{{\mathfrak y}} \nc\dop{\delta}
\nc{\Reduce}{{\rm Red}}

\font\cyr=wncyr10 \font\cyrs=wncyr7
%=========================================================================
\nc{\redt}[1]{\textcolor{red}{#1}}
\nc{\li}[1]{\textcolor{red}{#1}}
\nc{\sz}[1]{\textcolor{blue}{\tt sz:#1}}
%=========================================================================

%%% Added by Markus:

\nc{\nonz}[1]{#1^\times}
\nc{\NNP}{\nonz{\NN}}
\nc{\stdint}[1]{J_{#1}}
\nc{\dualmod}[1]{#1^*}
\nc{\alghom}[1]{#1^\bullet}
\nc{\End}{\mathrm{End}}
\nc{\rng}{\mathrm{\mathcal{R}}}
\nc{\codim}{\mathrm{codim}}
\nc{\evl}{\mathrm{ev}}

%%% Added by Shanghua:
\nc{\oh}{\circledast}
%\nc{\oh}{\,\small{\text{\textcircled{\tiny{\frakH}}}}\,}
%\nc{\oh}{\tikz[baseline=(X.base)]
%   \node (X) [draw, shape=circle, inner sep=0] {\strut \tiny{H}};}
%\nc{\ohs}{\small{\text{\textcircled{\tiny{$\frakH$}}}}}
\nc{\ohs}{\oh}
\nc{\ohzs}{\odot}
\nc{\ohz}{\odot}
%\nc{\ohzs}{\,\small{\textcircled{\tiny{$\frakh$}}}\,}
%\nc{\ohz}{\small{\textcircled{\tiny{$\frakh$}}}}
\nc{\ug}{\mathfrak{U}_h(\mathfrak{g})}
\nc{\frakg}{\mathfrak{g}}
\nc{\frakU}{\mathfrak U}
\nc{\brag}{[,]_\frakg}
\newenvironment{thmenumerate}{\leavevmode\begin{enumerate}[leftmargin=1.5em]}{\end{enumerate}}

\nc{\av}{\alpha_V}
\nc{\az}{\alpha_T}
\nc{\ah}{\alpha_h}
\nc{\ag}{\alpha_\frakg}

\nc{\zb}[1]{\textcolor{blue}{ #1}}

%=========================================================================
\begin{document}
\title[Enveloping algebras and PBW theorem for involutive Hom-Lie algebras]{Universal enveloping algebras and Poincar\'e-Birkhoff-Witt theorem for involutive Hom-Lie algebras}

\author{Li Guo}
\address{
Department of Mathematics and Computer Science, Rutgers University, Newark, NJ 07102, USA}
\email{liguo@rutgers.edu}

\author{Bin Zhang}
\address{School of Mathematics, Yangtze Center of Mathematics,
Sichuan University, Chengdu, 610064, P. R. China}
\email{zhangbin@scu.edu.cn}

\author{Shanghua Zheng}
\address{Department of Mathematics, Lanzhou University, Lanzhou, Gansu 730000, China}
\email{zheng2712801@163.com}

%========================================================================
\hyphenpenalty=8000
\date{\today}

\begin{abstract}
A Hom-type algebra is called involutive if its Hom map is multiplicative and involutive.
In this paper, we obtain an explicit construction of the free involutive Hom-associative algebra on a Hom-module. We then apply this construction to obtain the universal enveloping algebra of an involutive Hom-Lie algebra. Finally we obtain a Poincar\'e-Birkhoff-Witt theorem for the enveloping associative algebra of an involutive Hom-Lie algebra.
\end{abstract}

\subjclass[2010]{17A30,17A50,17B35}

\keywords{Hom-Lie algebra, Hom-associative algebra, involution, universal enveloping algebra, Poincar\'e-Birkhoff-Witt theorem}

\maketitle

\tableofcontents

\hyphenpenalty=8000 \setcounter{section}{0}

\allowdisplaybreaks

%========================================================================

\section{Introduction}\mlabel{sec:int}

Hom algebras were first introduced in the Lie algebra setting~\cite{HLS} with motivation from physics though its origin can be traced back in earlier literature such as~\cite{Hu}. Among the many Hom-generalizations of classical algebraic structures~\cite{AM,HMS,LS,Mak,MY,MS1,Sh,SB,Ya1,Ya2,Ya3,Ya4}, Hom-associative algebras were introduced to generalize the classical construction of Lie algebras from associative algebra by taking commutators to Hom-Lie algebras~\cite{MS2}.
The universal enveloping algebra of a Lie algebra is obtained as a suitable quotient of the free associative algebra, namely the tensor algebra, on the underlying module of the Lie algebra. For the lack of a suitable construction of free Hom-associative algebras, this approach did not work in the Hom setting. To get around this difficulty, D. Yau~\cite{Ya4} took a different approach where he constructed the free Hom-nonassociative algebra via weighted trees. Then the universal enveloping algebra of a Hom-Lie algebra is obtained as a suitable quotient of the free Hom-nonassociative algebra.
The next natural step is to generalize to Hom-Lie algebras the well-known Poincar\'e-Birkhoff-Witt Theorem which gives a basis of the enveloping (associative) algebra from a basis of the Lie algebra. Because of the size of the weighted trees, this construction of the enveloping algebra of a Hom-Lie algebra makes it intractable to obtain a Poincar\'e-Birkhoff-Witt type theorem for Hom-Lie algebra. See the discussion in~\cite{HMS}.

A recent paper~\cite{ZG} gave an explicit construction of free involutive Hom-associative algebras generated by a set, without having to go into the Hom-nonassociative realm. This construction makes it possible to adapt the classical theory of enveloping algebras of Lie algebras to the Hom setting, namely to construct the universal enveloping algebra of an involutive Hom-Lie algebra from free involutive Hom-associative algebra and to attempt for a Poincar\'e-Birkhoff-Witt type theorem for this enveloping algebra.

We carry out this approach in this paper. First in Section~\mref{sec:free} we modify the construction in~\cite{ZG} to obtain the free involutive Hom-associative algebra on a Hom-module. Based on this we provide in Section~\mref{sec:env} another construction of the enveloping algebra of involutive Hom-Lie algebras in addition to the one given in \cite{Ya4}.
Based on this new construction, we obtain in Section~\mref{sec:pbw} a Poincar\'e-Birkhoff-Witt theorem for an involutive Hom-Lie algebras, that is, we obtain a canonical basis for the enveloping algebra of an involutive Hom-Lie algebra. This shows in particular that the map from the Hom-Lie algebra to the enveloping algebra is injective, verifying a conjecture in~\cite{HMS} in the involutive case.

Throughout this paper, $\bfk$ denotes a commutative ring with an identity, which is further assumed to be a field whose characteristic is not two in the last section.

\section{Free involutive Hom-associative algebras}
\mlabel{sec:free}

The main purpose of this section is to construct the free involutive Hom-associative algebra on a Hom-module. In Section~\mref{ssec:freeonvect} we recall the basic concepts and give the construction of this free object, stated in Theorem~\mref{thm:profree}. The proof of this theorem is given in Section~\mref{ss:freepf}.

\subsection{Free involutive Hom-associative algebras on a Hom-module}
\mlabel{ssec:freeonvect}
We first recall the basic concepts.
\begin{defn}
\begin{enumerate}
\item
A {\bf Hom-module} is a pair $(V,\alpha_V)$ consisting of a $\bfk$-module $V$ and a linear operator $\alpha_V: V\rightarrow V$. It is also called an {\bf operated module} in~\cite{Gop}.
\item
A {\bf Hom-associative algebra} is a triple $(A,\cdot,\alpha_A)$ consisting of a $\bfk$-module $A$, a linear map $\cdot:A\ot A\rightarrow A$, called the multiplication, and a multiplicative linear operator  $\alpha_A:A\rightarrow A$ (namely $\alpha_A(x\cdot y)=\alpha_A(x)\cdot\alpha_A(y)$)  satisfying the {\bf Hom-associativity}
\begin{equation}
\alpha_A(x)\cdot(y\cdot z)=(x\cdot y)\cdot\alpha_A(z)
\mlabel{eq:homass}
\end{equation}
for all $x,y,z\in A$.
\item
A Hom-associative algebra $(A,\cdot,\alpha_A)$ (resp. Hom-module $(V,\alpha_V)$) is said to be {\bf involutive} if $\alpha_A^2=\id$ (resp. $\alpha_V^2=\id$).
\item
Let $(V,\alpha_V)$ and $(W,\alpha_W)$ be Hom-modules. A $\bfk$-linear map $f:V\to W$ is called a {\bf morphism of Hom-modules } if $f(\alpha_V(x))=\alpha_W(f(x))$ for all $x\in V$.
\item
Let $(A,\cdot,\alpha_A)$ and $(B,\ast,\alpha_B)$ be two Hom-associative algebras. A $\bfk$-linear map $f:A\rightarrow B$ is a {\bf morphism of Hom-associative algebras} if
$$f(x\cdot y)=f(x)\ast f(y)\quad\text{and}\quad f(\alpha_A(x))=\alpha_B(f(x))\quad\text{for all}\, x,y\in A.$$
\item
Let $(A,\cdot,\alpha_A)$ be a Hom-associative algebra.
A submodule $B\subseteq A$ is called a {\bf Hom-associative subalgebra} of $A$ if $B$ is closed under the multiplication $\cdot$ and $\alpha_A(B)\subseteq B$.
\mlabel{it:subalg}
\item
Let $(A,\cdot,\alpha_A)$ be a Hom-associative algebra.
A submodule $I\subseteq A$ is called a {\bf Hom-ideal} of $A$ if $x\cdot y\in I, \,y\cdot x\in I$ for all $x\in I,y\in A$, and  $\alpha_A(I)\subseteq I$.
\end{enumerate}
\mlabel{def:Homasso}
\end{defn}

Examples and studies of Hom-associative algebras can be found in~\cite{MS1,MS2} and the references therein.

Free involutive Hom-associative algebra on an involutive Hom-module can be defined from the left adjoint functor of the forgetful functor from the category of involutive Hom-associative algebras to the category of involutive Hom-modules. In concrete terms, we have

\begin{defn}
Let $(V,\alpha_V)$ be an involutive Hom-module. A {\bf free involutive Hom-associative algebra on $V$} is an involutive Hom-associative algebra $(F_{IHA}(V),\ast,\alpha_F)$
together with a morphism of Hom-modules $j_V:(V,\alpha_V)\rightarrow (F_{IHA}(V),\alpha_F)$ with the property that, for any involutive Hom-associative algebra $(A,\cdot,\alpha_A)$ together with a morphism $f:(V,\alpha_V)\rightarrow (A,\alpha_A)$ of Hom-modules, there is a unique morphism $\bar{f}:(F_{IHA}(V),\ast,\alpha_F)\rightarrow (A,\cdot,\alpha_A)$ of Hom-associative algebras such that $f=\bar{f}\circ j_V$.
\end{defn}

The well-known construction of the (non-unitary) free associative algebra on a module $V$ is the tensor algebra $T^+(V):=\oplus_{n\geq 1}V^{\ot n}$ equipped with the concatenation tensor product.
Modifying~\cite{ZG}, we will give a construction of the free involutive Hom-associative algebra on an involutive module $(V,\alpha_V)$ which take a similar form to the tensor algebra, hence called the {\bf Hom-tensor algebra} and denoted by $T^+_h(V)$.

As a $\bfk$-module, $T^+_h(V)$ is the same as the tensor algebra:
$$T^+_h(V):=\bigoplus_{n\geq 1} V^{\ot n}.$$
To equip it with a Hom-associative algebra structure, we first extend the linear map $\alpha_V$ on $V$ to a linear map $\az$ on $V^{\ot n}, n\geq 1,$ by the tensor multiplicativity: define
\begin{equation}
\az(\fraka)=\az(a_1\ot \cdots\ot a_n):=\alpha_V(a_1)\ot \cdots
\ot \alpha_V(a_n)
\mlabel{eq:defalpha0}
\end{equation}
for any pure tensor $\fraka:=a_1\ot \cdots \ot a_n\in V^{\ot n}$ and further extend to $T^+_h(V)$ by additivity. Note that $\az$ is compatible with the tensor product in $T^+_h(V)$, that is,
\begin{equation}
\az(\fraka\ot \frakb)=\az(\fraka)\ot \az(\frakb) \quad\text{for all}\, \fraka\in V^{\ot m}, \frakb\in V^{\ot n}.
\mlabel{eq:alpcom}
\end{equation}
Since $(V,\alpha_V)$ is involutive, we obtain
\begin{equation}
\az^2=\id.
\mlabel{eq:invol}
\end{equation}

We next define a binary operation $\ohz=\ohz_V$ on $T^+_h(V)$. For any pure tensors $\fraka=a_1\ot \cdots\ot a_m\in V^{\ot m}$ and $\frakb=b_1\ot \cdots \ot b_n\in V^{\ot n}$, we define $\fraka \ohz \frakb$ by induction on $n\geq 1$. When $n=1$, we have $\frakb\in V$ and then define
\begin{equation}
\fraka \ohz\frakb:=\fraka\ot \frakb
\mlabel{eq:ohini}
\end{equation}
Assume that $\fraka \ohz\frakb$ has been defined when $n\leq k$ with $k\geq 1$. Consider $\fraka=a_1\ot \cdots \ot a_m\in V^{\ot m}$ and $\frakb=b_1\ot \cdots b_{k+1}\in V^{\ot (k+1)}$. We then define
\begin{equation}
\fraka\ohz \frakb:=
\big(\az(\fraka)\ohz (b_1\ot \cdots \ot b_k)\big)\ot \az(b_{k+1})
\mlabel{eq:ohrec}
\end{equation}
and apply the induction hypothesis to the right hand side. There is also an explicit formula for $\ohz$:
\begin{lemma}
For $\fraka\in V^{\ot m}$ and $\frakb\in V^{\ot n}$ where $m, n\geq 1$, we have
\begin{equation}
\fraka\ohz \frakb=
\az^{n-1}(\fraka)\ot b_1\ot \az(b_2\ot \cdots \ot b_n).
\mlabel{eq:ohexp}
\end{equation}
\mlabel{lem:ohexp}
\end{lemma}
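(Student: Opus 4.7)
The plan is to induct on $n \geq 1$, following exactly the recursive structure of the definition of $\ohz$ given in \eqref{eq:ohini} and \eqref{eq:ohrec}.

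For the base case $n = 1$, the formula to be proved reads $\fraka \ohz b_1 = \az^{0}(\fraka) \ot b_1 = \fraka \ot b_1$, with the understanding that the tail $\az(b_2 \ot \cdots \ot b_n)$ is the empty tensor when $n = 1$. This matches \eqref{eq:ohini} directly, so the case $n = 1$ is immediate.

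For the inductive step, suppose the formula holds for a fixed $n = k \geq 1$ and consider $\frakb = b_1 \ot \cdots \ot b_{k+1} \in V^{\ot (k+1)}$. By the recursive definition \eqref{eq:ohrec},
$$\fraka \ohz \frakb = \bigl(\az(\fraka) \ohz (b_1 \ot \cdots \ot b_k)\bigr) \ot \az(b_{k+1}).$$
Apply the induction hypothesis to the inner product $\az(\fraka) \ohz (b_1 \ot \cdots \ot b_k)$ (with $\fraka$ replaced by $\az(\fraka)$ and $n$ by $k$) to rewrite this as
$$\az^{k-1}(\az(\fraka)) \ot b_1 \ot \az(b_2 \ot \cdots \ot b_k) \ot \az(b_{k+1}) = \az^{k}(\fraka) \ot b_1 \ot \az(b_2 \ot \cdots \ot b_k) \ot \az(b_{k+1}).$$
Finally, invoke the tensor-multiplicativity \eqref{eq:alpcom} of $\az$ to merge the last two factors: $\az(b_2 \ot \cdots \ot b_k) \ot \az(b_{k+1}) = \az(b_2 \ot \cdots \ot b_{k+1})$. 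This produces $\az^{k}(\fraka) \ot b_1 \ot \az(b_2 \ot \cdots \ot b_{k+1})$, which is exactly \eqref{eq:ohexp} in the case $n = k+1$, completing the induction.

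The argument is essentially a direct unrolling of the recursion, so no real obstacle is expected. The only mild subtlety is notational: one must fix a convention for $n = 1$ (that the formal block $\az(b_2 \ot \cdots \ot b_n)$ is absent rather than undefined), and one must use \eqref{eq:alpcom} exactly once at each inductive step to fold the newly appended $\az(b_{k+1})$ into the running application of $\az$ on the tail.
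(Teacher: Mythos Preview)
Your proof is correct and follows essentially the same approach as the paper's own proof: induction on $n$, with the base case given by \eqref{eq:ohini} and the inductive step by applying the recursion \eqref{eq:ohrec} and the induction hypothesis to $\az(\fraka)\ohz(b_1\ot\cdots\ot b_k)$. The only difference is cosmetic: you explicitly invoke \eqref{eq:alpcom} to fold $\az(b_2\ot\cdots\ot b_k)\ot\az(b_{k+1})$ into $\az(b_2\ot\cdots\ot b_{k+1})$, whereas the paper leaves that step implicit.
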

\begin{proof}
We prove by induction on $n\geq 1$ with the case when $n=1$ given in Eq.~(\mref{eq:ohini}). Assume that the lemma has been proved when $n=k$ for $k\geq 1$. For $\frakb\in V^{\ot (k+1)}$ we write $\frakb=\frakb^\prime\ot b_{k+1}$ with $\frakb^\prime:=b_1\ot \cdots \ot b_k$. Then by Eq.~(\mref{eq:ohrec}) and the induction hypothesis we derive
$$ \fraka\ohz \frakb=
\big(\az(\fraka)\ohz (b_1\ot \cdots \ot b_k)\big)\ot \az(b_{k+1})
=\az^k(\fraka)\ot b_1\ot \az(b_2\ot \cdots \ot b_k)
\ot \az(b_{k+1}),$$
as needed.
\end{proof}
Extending $\ohz$ biadditively, we obtain a binary operation
$$\ohz: T^+_h(V)\ot T^+_h(V)\rightarrow T^+_h(V).$$
Finally define
$$i_V:V\rightarrow T^+_h(V)$$
to be the canonical inclusion map.

The following is our main result on the free involutive Hom-associative algebra on an involutive Hom-module, generalizing~\cite{ZG}. It will be proved in the next subsection.
\begin{theorem}
Let $(V,\alpha_V)$ be an involutive Hom-module. Let $T^+_h(V), \,\az,\, \ohz$ and $i_V$ be defined as above.
Then
\begin{thmenumerate}
\item
The triple $T^+_h(V):=(T^+_h(V),\,\ohz,\,\az)$ is an involutive Hom-associative algebra.
\mlabel{it:inv0}
\item
The quadruple $(T^+_h(V),\,\ohz,\, \az,\, i_V)$ is the free involutive Hom-associative algebra on $V$.
\mlabel{it:freeinv0}
\end{thmenumerate}
\mlabel{thm:profree}
\end{theorem}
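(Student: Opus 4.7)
The plan is to verify the two parts in turn, working with pure tensors and extending by linearity. For (a), involutivity $\az^2=\id$ is Eq.~(\mref{eq:invol}). Multiplicativity of $\az$ with respect to $\ohz$ is immediate from the explicit formula of Lemma~\mref{lem:ohexp}: applying $\az$ to $\fraka\ohz\frakb=\az^{n-1}(\fraka)\ot b_1\ot\az(b_2\ot\cdots\ot b_n)$ and using tensor-multiplicativity Eq.~(\mref{eq:alpcom}) yields the same expression as applying Lemma~\mref{lem:ohexp} to $\az(\fraka)\ohz\az(\frakb)$, once $\az^2=\id$ is invoked. For Hom-associativity Eq.~(\mref{eq:homass}), I would take pure tensors $\fraka\in V^{\ot\ell}$, $\frakb\in V^{\ot m}$, $\frakc\in V^{\ot n}$ and expand each side by two applications of Lemma~\mref{lem:ohexp}. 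The two sides reduce to the same pure tensor in every factor except the $\az$-exponent on $\fraka$: the left-hand side yields $\az^{m+n}(\fraka)$ while the right-hand side yields $\az^{m+n-2}(\fraka)$, and these agree by involutivity.

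For part (b), the identity $a_1\ot\cdots\ot a_n=(a_1\ot\cdots\ot a_{n-1})\ohz a_n$, immediate from the base case Eq.~(\mref{eq:ohini}), forces any Hom-associative algebra morphism $\bar{f}\colon T^+_h(V)\to A$ extending $f$ to satisfy $\bar{f}(a_1\ot\cdots\ot a_n)=\bar{f}(a_1\ot\cdots\ot a_{n-1})\cdot f(a_n)$, determining $\bar{f}$ on $V^{\ot n}$ by induction on $n$ and thus everywhere by linearity, which proves uniqueness. Taking this formula as the definition, I would check $\bar{f}\circ\az=\alpha_A\circ\bar{f}$ by a short induction on $n$, using Eq.~(\mref{eq:defalpha0}) for the action of $\az$ on pure tensors, the Hom-module axiom for $f$, and multiplicativity of $\alpha_A$.

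The crux is the identity $\bar{f}(\fraka\ohz\frakb)=\bar{f}(\fraka)\cdot\bar{f}(\frakb)$, which I would prove by induction on $n=|\frakb|$. The base case $n=1$ is immediate from the definitions. For the inductive step, Eq.~(\mref{eq:ohrec}) together with the base case of $\ohz$ rewrites $\fraka\ohz\frakb$ as $(\az(\fraka)\ohz\frakb')\ohz\az(b_{n+1})$ with $\frakb'=b_1\ot\cdots\ot b_n$. Applying the $n=1$ case of the current induction followed by the inductive hypothesis reduces the claim to
\[
(\alpha_A(\bar{f}(\fraka))\cdot\bar{f}(\frakb'))\cdot\alpha_A(f(b_{n+1}))=\bar{f}(\fraka)\cdot(\bar{f}(\frakb')\cdot f(b_{n+1})),
\]
which is precisely Hom-associativity Eq.~(\mref{eq:homass}) after substituting $\alpha_A(x)$ for $x$ on the left-hand side and collapsing $\alpha_A^2(x)=x$ by involutivity.

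This last step is also the main obstacle: Hom-associativity only loosely controls reparenthesization, and the only mechanism for cancelling the extra $\alpha_A$ factors introduced by the recursive definition of $\ohz$ is the involutivity $\alpha_A^2=\id$ on the target. Tracking these $\alpha_A$-exponents carefully---both in the source, through the layered $\az$'s in Lemma~\mref{lem:ohexp}, and in the target---so that involutivity is invoked at exactly the right place, is the heart of the argument and the reason the result genuinely requires the involutive hypothesis.
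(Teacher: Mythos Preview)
Your proposal is correct and follows essentially the same approach as the paper: both parts are verified on pure tensors using the explicit formula of Lemma~\mref{lem:ohexp} for part~(\mref{it:inv0}), and the inductive definition $\bar{f}(\fraka'\ot a_{n+1})=\bar{f}(\fraka')\cdot f(a_{n+1})$ together with induction on $|\frakb|$ via Eq.~(\mref{eq:ohrec}), Hom-associativity, and $\alpha_A^2=\id$ for part~(\mref{it:freeinv0}). The only cosmetic difference is that you derive uniqueness first as motivation for the definition of $\bar{f}$, whereas the paper proves it last.
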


\subsection{The proof of Theorem~\mref{thm:profree}}
\mlabel{ss:freepf}
In this section we give the proof of Theorem~\mref{thm:profree}.
\smallskip

\noindent
(\mref{it:inv0}) The involution property of $\az$ is already established in Eq.~(\mref{eq:invol}). We first prove the multiplicativity
\begin{equation}
\az(u\ohz v)=\az(u)\ohz\az(v) \quad \text{for all } u,v\in T^+_h(V),
\mlabel{eq:ohcom}
\end{equation}
for which we just need to verify it for pure tensors $u=\fraka\in V^{\ot m}$ and  $v=\frakb=\frakb'\ot b_n\in V^{\ot n}$, where $m, n\geq 1$.
By Eqs.~(\mref{eq:invol}) and (\mref{eq:ohexp}) we have
$$\az(\fraka\ohz \frakb)=\az(\az^{n-1}(\fraka)\ot b_1\ot \az(\frakb'))
=\az^n(\fraka)\ot \az(b_1)\ot \frakb'$$
and
$$\az(\fraka)\ohz\az(\frakb)=\az^{n-1}(\az(\fraka))\ot \az(b_1)\ot \frakb'$$
as needed.

We next verify the Hom-associativity:
\begin{equation}
\az(\fraka)\ohz(\frakb\ohz \frakc)=(\fraka\ohz \frakb)\ohz \az(\frakc)\quad\text{for all } \fraka\in V^{\ot m},\frakb\in V^{\ot n}, \frakc\in V^{\ot\ell}.
\mlabel{eq:homass1}
\end{equation}
Again we only need to verify it when $\fraka\in \frakg^{\ot m}, \frakb=b_1\ot \frakb'\in \frakg^{\ot n}$ and $\frakc=c_1\ot \frakc'\in\frakg^{\ot \ell}$ are pure tensors. Then we have
$$\az(\fraka)\ohz(\frakb\ohz\frakc) =\az(\fraka)\ohz\big(\az^{\ell-1}(\frakb) \ot c_1\ot \az(\frakc')\big)
=\az^{n+\ell}(\fraka)\ot \az^{\ell-1}(b_1)\ot \az^\ell(\frakb')\ot \az(c_1)\ot \frakc'$$
$$(\fraka \ohz\frakb)\ohz \az(\frakc)
=(\az^{n-1}(\fraka)\ot b_1\ot \az(\frakb'))\ohz \az(\frakc)
=\az^{n-1+\ell-1}(\fraka)\ot\az^{\ell-1}(b_1)\ot \az^\ell(\frakb')\ot \az(c_1)\ot \az^2(\frakc'),$$
again giving what is needed by Eq.~(\mref{eq:invol}).
\smallskip

\noindent
(\mref{it:freeinv0}) Let $(A, \ast, \alpha_A)$ be an involutive Hom-associative algebra. Let $f: V\rightarrow A$ be a  morphism of Hom-modules. We will construct a morphism of Hom-associative algebras
$$\bar{f}: T^+_h(V)\rightarrow A$$
by inductively defining $\bar{f}(\fraka)$ for pure tensors $\fraka\in V^{\ot n}$. For $n=1$, we have $\fraka\in V$ and define
\begin{equation}
\bar{f}(\fraka):=f(\fraka).
\mlabel{eq:fbar1}
\end{equation}
Inductively, for any pure tensor $\fraka=a_1\ot a_2\ot \cdots \ot a_{n+1}=\fraka'\ot a_{n+1}\in V^{\ot (n+1)}$ with $\fraka'=a_1\ot\cdots\ot a_n\in V^{\ot n}$, we define
\begin{equation}
\bar{f}(\fraka):=\bar{f}(\fraka')\ast\bar{f}(a_{n+1}).
\mlabel{eq:deff}
\end{equation}
Since $i_V(a)=a$ for $a\in V$, we have
\begin{equation}(\bar{f}\circ i_V)(a)=f(a).
\end{equation}

We next prove that the $\bfk$-linear map $\bar{f}$ defined above is indeed a morphism of Hom-associative algebras. We first verify that $\bar{f}$ satisfies
\begin{equation}
\bar{f}(\az(\fraka))=\alpha_A(\bar{f}(\fraka)) \quad \text{ for all } \fraka\in T^+_h(V),
\mlabel{eq:fgam}
\end{equation}
checking it for any pure tensor $\fraka\in V^{\ot n}$ by induction on $n\geq 1$. When $n=1$, we have $\fraka\in V$.  Since $f:(V,\av)\rightarrow (A,\alpha_A)$ is a morphism of Hom-modules, by the definition of $\bar{f}$, we have
\begin{equation}
\bar{f}(\az(\fraka))=f(\av(\fraka))
=\alpha_A(f(\fraka))=\alpha_A(\bar{f}
(\fraka)).
\end{equation}
For the inductive step, consider $\fraka=\fraka'\ot a_{n+1}\in V^{\ot (n+1)}$ with $\fraka'\in V^{\ot n}, n\geq 1$. Then
\begin{eqnarray*}
\bar{f}(\az(\fraka))&=&\bar{f}(\az(\fraka')\ot \az(a_{n+1}))\quad\text{(by Eq.~(\mref{eq:alpcom}))}\\
&=&\bar{f}(\az(\fraka'))\ast\bar{f}
({\az(a_{n+1})})\quad\text{(by Eq.~(\mref{eq:deff}))}\\
&=&\alpha_A(\bar{f}(\fraka'))
\ast\alpha_A(\bar{f}(a_{n+1}))\quad
\text{(by the induction hypothesis)}\\
&=&\alpha_A(\bar{f}(\fraka')\ast\bar{f}(a_{n+1}))\\
&=&\alpha_A(\bar{f}(\fraka))
\quad\text{(by Eq.~(\mref{eq:deff}))},
\end{eqnarray*}
completing the inductive proof of Eq.~(\mref{eq:fgam}).

We next verify
\begin{equation}
\bar{f}(\fraka\ohz \frakb)=\bar{f}(\fraka)\ast \bar{f}(\frakb) \quad \text{for all } \fraka,\frakb\in T^+_h(V),
\mlabel{eq:homf}
\end{equation}
by checking it for  pure tensors $\fraka\in V^{\ot m}$ and $\frakb\in V^{\ot m}$ inductively on $n\geq 1$. When $n=1$, we have $\frakb\in V$. By the definition of $\bar{f}$, we get
$$\bar{f}(\fraka\ohz \frakb)=\bar{f}(\fraka\ot \frakb)=\bar{f}(\fraka)\ast \bar{f}(\frakb).$$
Assuming that Eq.~(\mref{eq:homf}) has been proved for $m\leq k$ for a $k\geq 1$, consider $\frakb=\frakb'\ot b_{k+1}\in V^{\ot (k+1)}$, where $\frakb'\in V^{\ot k}$. Then we obtain
\begin{eqnarray*}
\bar{f}(\fraka\ohz \frakb)&=& \bar{f}((\az(\fraka)\ohz \frakb')\ot \az(b_{k+1}))\quad\text{(by Eq.~(\mref{eq:ohrec}))}\\
&=&\bar{f}((\az(\fraka)\ohz \frakb'))\ast \bar{f}(\az(b_{k+1}))\quad\text{(by Eq.~(\mref{eq:deff}))}\\
&=&\big(\bar{f}(\az(\fraka))\ast \bar{f}(\frakb')\big)\ast \alpha_A(\bar{f}(b_{k+1}))\quad\text{(by Eq.~(\mref{eq:fgam}) and the induction hypothesis)}\\
&=&\alpha_A(\bar{f}(\az(\fraka)))\ast \big(\bar{f}(\frakb')\ast \alpha_A^2(\bar{f}(b_{k+1}))\big)\quad\text{(by Hom-associativity)}\\
&=&\bar{f}(\fraka)\ast \big(\bar{f}(\frakb')\ast \bar{f}(b_{k+1})\big)\quad\text{(by $\alpha_A^2=\id$ and Eq.~(\mref{eq:fgam}))}\\
&=&\bar{f}(\fraka)\ast \bar{f}(\frakb)
\quad\text{(by Eq.~(\mref{eq:deff}))}.
\end{eqnarray*}
This completes the induction.

We finally prove the uniqueness of $\bar{f}$. Suppose that there is another
morphism of Hom-associative algebras $\tilde{f}:T^+_h(V)\rightarrow A$ such that $\tilde{f}\circ i_V=f$.
We prove $\bar{f}(\fraka)=\tilde{f}(\fraka)$ for $\fraka\in V^{\ot n}$ by induction on $n\geq 1$. When $n=1$, we have $\fraka\in V$.
Then we get
$$\tilde{f}(\fraka)=(\tilde{f}\circ i_V)(\fraka)=f(\fraka)=\bar{f}(\fraka).$$
Assume $\bar{f}(\fraka)=\tilde{f}(\fraka)$ holds for $\fraka\in V^{\ot n}$ with $n\geq 1$.
Let $\fraka=\fraka'\ot a_{n+1}=\fraka'\ohz a_{n+1}\in V^{\ot (n+1)}$, where $\fraka'\in V^{\ot n}$. Then applying the induction hypothesis, we obtain
$$\bar{f}(\fraka)=\bar{f}(\fraka'\ohz a_{n+1})=\bar{f}(\fraka')\ast\bar{f}(a_{n+1})
= \tilde{f}(\fraka')\ast\tilde{f}(a_{n+1})
=\tilde{f}(\fraka' \ohz a_{n+1}) =\tilde{f}(\fraka).$$
Then we conclude $\bar{f}=\tilde{f}$.

This completes the proof of Theorem~\mref{thm:profree}.(\mref{it:freeinv0}) and therefore the proof of Theorem~(\mref{thm:profree}).

\section{Universal enveloping Hom-associative algebra of an involutive Hom-Lie algebra}
\mlabel{sec:env}

The first construction of the enveloping Hom-associative algebra of a Hom-Lie algebra was obtained by D. Yau in~\cite[Theorem~2]{Ya4}. He obtained his construction as a suitable quotient of his explicit construction by trees, of the free Hom-nonassociative algebra on the Hom-module underlying the given Hom-Lie algebra. Applying our explicit construction of free involutive Hom-associative algebras in the previous section, we will provide a construction of the enveloping Hom-associative algebra of an involutive Hom-Lie algebra. Its tensor form makes it more convenient for the study of the Poincar\'e-Birkhoff-Witt type theorem in Section~\mref{sec:pbw}.

We begin with recalling the definition of a Hom-Lie algebra~\cite{HLS}.
\begin{defn}
\begin{enumerate}
\item
A {\bf Hom-Lie algebra} is a triple $(\frakg,\brag,\beta_\frakg)$ consisting of a $\bfk$-module $\frakg$, a  bilinear map $\brag:\frakg\times \frakg\rightarrow \frakg$ and a linear map $\beta_\frakg:\frakg\rightarrow\frakg$ satisfying
\begin{equation}
[x,y]_\frakg=-[y,x]_\frakg,
\mlabel{eq:skewsym}
\end{equation}
\begin{equation}
(\text{\bf Hom-Jacobi identity})\quad [\beta_\frakg(x),[y,z]_\frakg]_\frakg
+[\beta_\frakg(y),[z,x]_\frakg]_\frakg
+[\beta_\frakg(z),[x,y]_\frakg]_\frakg=0,
\mlabel{eq:homjacobi}
\end{equation}
\begin{equation}
\beta_\frakg([x,y]_\frakg)=[\beta_\frakg(x),\beta_\frakg(y)]_\frakg
\quad\text{for all } x,y,z\in\frakg.
\mlabel{eq:hommult}
\end{equation}
\item
A {\bf morphism of Hom-Lie algebras} $f:(\frakg,\brag,\beta_\frakg)
\rightarrow(\frakk,[,]_\frakk, \beta_\frakk)$ is a $\bfk$-linear map
$f:\frakg\rightarrow\frakk$ such that
$$
f([x,y]_\frakg)=[f(x),f(y)]_\frakk \quad
\text{and}\quad
f(\beta_\frakg(x)=\beta_\frakk(f(x))
\quad \text{for all } x, y\in\frakg.
$$
\item
A Hom-Lie algebra $(\frakg,\brag,\beta_\frakg)$ is called {\bf involutive} if $\beta_\frakg^2=\id$.
\end{enumerate}
\end{defn}

As in the case of an associative algebra and a Lie algebra, a Hom-associative algebra $(A,\cdot,\alpha_A)$ gives a Hom-Lie algebra by antisymmetrization. We denote this Hom-Lie algebra by $(A,[\,,\,]_A,\beta_A)$. So $\beta_A=\alpha_A$ and $[x,y]_A=xy-yx$ for $x, y\in A$.

\begin{defn}Let $\frakg:=(\frakg,\brag,\beta_\frakg)$ be a Hom-Lie algebra.
A {\bf universal enveloping Hom-associative algebra} of $\frakg$ is a Hom-associative algebra $\frakU_\frakg:=(\frakU_\frakg,\ast_\frakU,\alpha_\frakU)$, together with a morphism
$\phi_\frakg:(\frakg, [\,,\,]_\frakg,\beta_\frakg)\rightarrow (\frakU_\frakg,[\,,\,]_{\frakU_\frakg},\beta_{\frakU_\frakg})$ of Hom-Lie algebras, that satisfies the following universal property: for any Hom-associative algebra $A:=(A,\cdot_A, \alpha_A)$, and any Hom-Lie algebra morphism $\xi:(\frakg, [\,,\,]_\frakg, \beta_\frakg)\rightarrow (A,[\,,\,]_A,\beta_A)$, there exists a unique morphism $\hat{\xi}: \frakU_\frakg\rightarrow A$ of Hom-associative algebras such that $\hat{\xi} \phi_\frakg=\xi$.
\mlabel{de:ueh}
\end{defn}

We next show that in the involutive case, the verification of the universal property can be suitably weakened.

\begin{lemma}
Let $(\frakg,\brag, \beta_\frakg)$ be an involutive Hom-Lie algebra.
\begin{enumerate}
\item
Let $(A,\cdot,\alpha_A)$ be a Hom-associative algebra. Let $f:(\frakg,\brag,\beta_\frakg) \to (A,[\,,\,]_A,\beta_A)$ be a morphism of Hom-Lie algebras and let $B$ be the Hom-associative subalgebra of $A$ generated by $f(\frakg)$. Then $B$ is involutive.
\mlabel{it:invenv1}
\item
The universal enveloping Hom-associative algebra $(\frakU_\frakg,\phi_\frakg)$ of  $(\frakg,\brag, \beta_\frakg)$ is involutive.
\mlabel{it:invenv2}
\item
In order to verify the universal property of $(\frakU_\frakg,\phi_\frakg)$ in Definition~\mref{de:ueh}, we only need to consider involutive Hom-associative algebras $A:=(A,\cdot_A, \alpha_A)$.
\mlabel{it:invenv3}
\end{enumerate}
\mlabel{lem:invenv}
\end{lemma}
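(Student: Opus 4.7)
The plan is to prove the three parts in order, with part~(\mref{it:invenv1}) providing the technical engine that feeds the remaining two. For~(\mref{it:invenv1}), I would introduce the submodule $B' := \{b \in B \mid \alpha_A^2(b) = b\}$ of $\alpha_A^2$-fixed points and show that $B'$ is a Hom-associative subalgebra of $B$ containing $f(\frakg)$; minimality of $B$ then forces $B' = B$. Closure of $B'$ under the product follows from multiplicativity of $\alpha_A^2$ (as the square of a multiplicative map), and closure under $\alpha_A$ is immediate since $\alpha_A$ commutes with its own powers. The inclusion $f(\frakg) \subseteq B'$ comes from the Hom-Lie morphism condition $\alpha_A \circ f = f \circ \beta_\frakg$ combined with the involutivity $\beta_\frakg^2 = \id$.

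For part~(\mref{it:invenv2}), I would apply~(\mref{it:invenv1}) with $f = \phi_\frakg$ to conclude that the Hom-associative subalgebra $B$ of $\frakU_\frakg$ generated by $\phi_\frakg(\frakg)$ is involutive. It then remains to show $B = \frakU_\frakg$, which I plan to do via a reflexive use of the universal property: regarding $\phi_\frakg$ as a Hom-Lie morphism into the antisymmetrization of $B$, the universal property supplies a Hom-associative morphism $\psi: \frakU_\frakg \to B$ with $\psi \circ \phi_\frakg = \phi_\frakg$. Composing with the inclusion $\iota: B \hookrightarrow \frakU_\frakg$ yields a Hom-associative endomorphism of $\frakU_\frakg$ that agrees with $\id_{\frakU_\frakg}$ after pre-composition with $\phi_\frakg$, so the uniqueness clause of the universal property identifies $\iota \circ \psi$ with the identity, giving $B = \frakU_\frakg$.

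For part~(\mref{it:invenv3}), given an arbitrary Hom-associative algebra $A$ and a Hom-Lie morphism $\xi: \frakg \to A$, I would pass to the Hom-associative subalgebra $C$ of $A$ generated by $\xi(\frakg)$, which is involutive by~(\mref{it:invenv1}). Factoring $\xi$ through $C$ and invoking the (assumed) universal property against the involutive algebra $C$ yields a Hom-associative morphism $\hat{\xi}_0: \frakU_\frakg \to C$; post-composition with $C \hookrightarrow A$ delivers the desired extension. Uniqueness reduces to the involutive case because, by~(\mref{it:invenv2}), $\frakU_\frakg$ is generated by $\phi_\frakg(\frakg)$, so any Hom-associative morphism $\frakU_\frakg \to A$ extending $\xi$ must take values in $C$.

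The main obstacle I anticipate is part~(\mref{it:invenv2}), whose self-referential invocation of the universal property—with $\frakU_\frakg$ itself playing the role of the target algebra through its subalgebra $B$—requires some care to set up correctly, particularly in keeping track of which structure (Hom-Lie or Hom-associative) is being used at each stage. Once~(\mref{it:invenv1}) is in hand, the other two parts are essentially closure-under-generators arguments.
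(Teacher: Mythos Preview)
Your proposal is correct and follows essentially the same approach as the paper. The only minor variation is in part~(\mref{it:invenv3}): the paper factors $\xi$ through the fixed-point subalgebra $C=\{x\in A\mid \alpha_A^2(x)=x\}$ (the same auxiliary subalgebra already introduced in the proof of~(\mref{it:invenv1})), whereas you factor through the Hom-associative subalgebra generated by $\xi(\frakg)$; both choices yield an involutive intermediary and the arguments are otherwise parallel. Your treatment of~(\mref{it:invenv2}) in fact spells out the reflexive universal-property argument that the paper only sketches in one line.
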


\begin{proof}
(\mref{it:invenv1})
Let
\begin{equation}
 C:=\{x\in A\,|\, \alpha_A^2(x)=x\}.
 \mlabel{eq:subinv}
\end{equation}
Then $C$ is clearly a submodule. Also for $x, y\in C$, from $\alpha_A(xy)=\alpha_A(x)\alpha_A(y)$, we have $\alpha_A^2(xy)=\alpha_A^2(x)\alpha_A^2(y)=xy$ and hence $xy\in C$. Further, from $x\in C$, we obtain $\alpha_A^2(\alpha_A(x))=\alpha_A(\alpha_A^2(x))=\alpha_A(x)$ and hence $\alpha_A(x)\in C$. Therefore $C$ is a Hom-associative subalgebra of $A$. 
Since $f$ is a morphism of Hom-Lie algebras, from the equation $\beta_\frakg^2=\id$, we obtain
\begin{equation}
\alpha_A^2(f(x))=\beta_A^2(f(x))=f(\beta_\frakg^2
(x))=f(x).
\mlabel{eq:betaA2}
\end{equation}
Hence $\im f\subseteq C$. Therefore $C$ is a Hom-associative subalgebra of $A$ that contains $\im f$. Then $C$ contains $B$ and so $B$ is involutive.
\smallskip

\noindent
(\mref{it:invenv2}) is a special case of Item~(\mref{it:invenv1}) since, from its universal property, $\frakU_\frakg$ is generated by $\phi_\frakg(\frakg)$ as a Hom-associative algebra.
\smallskip

\noindent
(\mref{it:invenv3})
We only need to prove that, assuming that the universal property of $\frakU_\frakg$ holds for involutive Hom-associative algebras, then it holds for all Hom-associative algebras. Thus let $(A,\cdot,\alpha_A)$ be a Hom-associative algebra and let $\xi:(\frakg,\brag,\beta_\frakg)\to (A,[\,,\,]_A,\beta_A)$ be a morphism of Hom-Lie algebras. Let $C=\{x\in A\,|\,\alpha_A^2(x)=x\}$ be the involutive Hom-associative subalgebra of $A$ defined in Eq.~(\mref{eq:subinv}). By Item~(\mref{it:invenv1}) and its proof, $\im\, \xi$ is contained in $C$ and thus $\xi$ is the composition of a morphism $\xi_C:(\frakg,\brag,\beta_\frakg)\to (C,[\,,\,]_C,\beta_C)$ of Hom-associative algebras with the inclusion $C\to A$. By our assumption, there is a morphism $\hat{\xi}_C:\frakU_\frakg\to C$ of Hom-associative algebras such that $\hat{\xi}_C\phi_\frakg=\xi_C$. Then composing with the inclusion $B\to A$, we obtain a morphism $\hat{\xi}:\frakU_\frakg\to A$ of Hom-associative algebras such that $\hat{\xi}\phi_\frakg=\xi$. Further, suppose $\hat{\xi}^\prime:\frakU_\frakg \to A$ is another morphism of Hom-associative algebras such that $\hat{\xi}^\prime\phi_\frakg=\xi$. Since $\frakU_\frakg$ is involutive by Item~(\mref{it:invenv2}), $\im \, \hat{\xi}^\prime$ is involutive. Hence $\hat{\xi}^\prime$ is the composition of a morphism $\hat{\xi}^\prime_C:\frakU_\frakg\to C$ with the inclusion $C\to A$. Also, $\hat{\xi}^\prime_C \phi_\frakg=\xi_C$. Since $C$ is involutive, by our assumption at the beginning of the proof, the morphisms $\hat{\xi}_C$ and $\hat{\xi}^\prime_C$ coincide. Then $\hat{\xi}$ and $\hat{\xi}^\prime$ coincide. This completes the proof.
\end{proof}

Now we are ready to give our construction of the universal enveloping Hom-associative algebra of an involutive Hom-Lie algebra.
\begin{theorem}
Let $\frakg:=(\frakg,\brag,\beta_\frakg)$ be an involutive Hom-Lie algebra. Let $T^+_h(\frakg):=(T^+_h(\frakg),\ohz,\az)$ be the free Hom-associative algebra on the Hom-module underlying $\frakg$ obtained in Theorem~\mref{thm:profree}. Let $I_{\frakg,\beta}$ be the Hom-ideal of $T^+_h(\frakg)$ generated by the set
\begin{equation}
\left \{x\ot y-y\ot x-[x,y]_\frakg\,|\, x, y\in \frakg\right\}
\mlabel{eq:comhom}
\end{equation}
and let
$$\ug:=T^+_h(\frakg)/I_{\frakg,\beta}$$
be the quotient Hom-associative algebra. Let
$$\phi_0:=\pi\circ i_\frakg:\frakg\,{\overset{i_\frakg}{\longrightarrow}}\, T^+_h(\frakg) \,{\overset{\pi}{\longrightarrow}}\,\ug, \quad x\mapsto x+I_{\frakg,\beta},$$
be the composition of the natural inclusion $i_\frakg: \frakg\to T^+_h(\frakg)$ with the quotient map $\pi:T^+_h(\frakg)\to \ug$. Then
$(\ug,\phi_0)$
is a universal enveloping Hom-associative algebra of $\frakg$. The universal enveloping Hom-associative algebra of $\frakg$ is unique up to isomorphism.
\mlabel{thm:isougd}
\end{theorem}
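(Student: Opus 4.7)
The plan is to verify that $(\ug,\phi_0)$ satisfies the universal property of Definition~\ref{de:ueh}, reducing via Lemma~\ref{lem:invenv}(\ref{it:invenv3}) to involutive target algebras, and then to deduce uniqueness from the universal property in the standard way. First I would check that $\ug$ is a well-defined involutive Hom-associative algebra: since $I_{\frakg,\beta}$ is by definition a Hom-ideal of the involutive Hom-associative algebra $T^+_h(\frakg)$ obtained in Theorem~\ref{thm:profree}, the quotient inherits a Hom-associative structure $(\ohz,\,\alpha_\frakU)$, and $\alpha_\frakU$ is involutive because $\az^2=\id$. Next I would verify that $\phi_0$ is a morphism of Hom-Lie algebras: it intertwines the Hom maps because both $i_\frakg$ and $\pi$ do, and it preserves the bracket because for $x,y\in\frakg$ the element $x\ot y-y\ot x-[x,y]_\frakg$ lies in $I_{\frakg,\beta}$, so in $\ug$ one has $\phi_0(x)\ohz\phi_0(y)-\phi_0(y)\ohz\phi_0(x)=\phi_0([x,y]_\frakg)$, which is exactly $[\phi_0(x),\phi_0(y)]_{\ug}=\phi_0([x,y]_\frakg)$.

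To verify the universal property, by Lemma~\ref{lem:invenv}(\ref{it:invenv3}) it suffices to consider an involutive Hom-associative algebra $(A,\cdot_A,\alpha_A)$ and a Hom-Lie algebra morphism $\xi:(\frakg,\brag,\beta_\frakg)\to(A,[\,,\,]_A,\beta_A)$. Such a $\xi$ is in particular a morphism of involutive Hom-modules (since $\beta_A=\alpha_A$). Applying the universal property of the free involutive Hom-associative algebra $T^+_h(\frakg)$ from Theorem~\ref{thm:profree}(\ref{it:freeinv0}), I obtain a unique Hom-associative algebra morphism $\bar{\xi}:T^+_h(\frakg)\to A$ with $\bar{\xi}\circ i_\frakg=\xi$. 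The crucial step is to show that $\bar{\xi}$ vanishes on $I_{\frakg,\beta}$. Since the kernel of a Hom-associative algebra morphism is a Hom-ideal, it is enough to check vanishing on the generating set of Eq.~(\ref{eq:comhom}), and for $x,y\in\frakg$ this follows from
\[
\bar{\xi}(x\ot y-y\ot x-[x,y]_\frakg)=\xi(x)\cdot_A\xi(y)-\xi(y)\cdot_A\xi(x)-\xi([x,y]_\frakg)=[\xi(x),\xi(y)]_A-\xi([x,y]_\frakg)=0,
\]
using that $\xi$ is a Hom-Lie morphism. Hence $\bar{\xi}$ descends to a Hom-associative algebra morphism $\hat{\xi}:\ug\to A$ satisfying $\hat{\xi}\circ\phi_0=\xi$.

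For uniqueness of $\hat{\xi}$, I would note that $\ug$ is generated as a Hom-associative algebra by $\phi_0(\frakg)$, since $T^+_h(\frakg)$ is generated by $i_\frakg(\frakg)$ and $\pi$ is surjective; therefore any Hom-associative algebra morphism out of $\ug$ is determined by its restriction to $\phi_0(\frakg)$, which in turn is pinned down by $\xi$. Finally, uniqueness of $(\ug,\phi_0)$ up to isomorphism is the standard consequence of the universal property: given two such pairs, the universal property applied in each direction produces mutually inverse Hom-associative algebra isomorphisms between them. The argument is essentially formal once Theorem~\ref{thm:profree} and Lemma~\ref{lem:invenv} are in hand; the only point requiring a bit of care is the passage from vanishing of $\bar{\xi}$ on the generators to vanishing on the full Hom-ideal, which is immediate from the observation that $\ker\bar{\xi}$ is itself a Hom-ideal.
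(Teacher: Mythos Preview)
Your proposal is correct and follows essentially the same route as the paper: reduce to involutive targets via Lemma~\ref{lem:invenv}(\ref{it:invenv3}), lift $\xi$ through the free object $T^+_h(\frakg)$ of Theorem~\ref{thm:profree}, check vanishing on the generators of $I_{\frakg,\beta}$, and descend. The only cosmetic difference is that the paper verifies uniqueness of $\hat{\xi}$ by an explicit induction on tensor degree, whereas you invoke the equivalent statement that $\ug$ is generated by $\phi_0(\frakg)$.
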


\begin{proof}
Let $\oh$ denote the multiplication on $\ug$.

We first verify that $\phi_0:(\frakg, [\,,\,]_\frakg,\beta_\frakg)\rightarrow (\frakU_\frakg,[\,,\,]_{\frakU_\frakg},\beta_{\frakU_\frakg})$ is a morphism of Hom-Lie algebras.
Since both $i_\frakg$ and $\pi$ are Hom-module morphisms, so is their composition $\phi_0$. Since $x\ot y-y\ot x-[x,y]_\frakg$ is in $I_{\frakg,\beta}=\ker \pi$ for $x,y\in\frakg$, we obtain
\begin{align*}
\phi_0([x,y]_\frakg)
&=\pi([x,y]_\frakg)
=\pi(x\ot y-y\ot x)
=\pi(x\ohz y-y\ohz x)
=\pi(x)\oh\pi(y)-\pi(y)\oh\pi(x)\\
&=\phi_0(x)\oh\phi_0(y)
-\phi_0(y)\oh\phi_0(x)
= [\phi_0(x),\phi_0(y)]_{\ug},
\end{align*}
as needed.

We next prove that $\ug$ satisfies the desired universal property. By Lemma~\mref{lem:invenv}.(\mref{it:invenv3}). We only need to consider  an arbitrary Hom-associative algebra $A:=(A,\ast_A,\alpha_A)$ that is involutive.
Let $\xi:(\frakg, [\,,\,]_\frakg,\beta_\frakg)\rightarrow (A,[\,,\,]_A,\beta_A)$ be a morphism of Hom-Lie algebras.
Since $T^+_h(\frakg)$ is the free involutive Hom-associative algebra on the underlying Hom-module of $\frakg$, by Theorem~\mref{thm:profree}, there exists a Hom-associative algebra morphism $\bar{\xi}:T^+_h(\frakg)\rightarrow A$ such that $\bar{\xi}\circ i_\frakg=\xi$. Since $\xi$ is a morphism of Hom-Lie algebras, we have
\begin{align*}
&\bar{\xi}(x\ot y-y\ot x)=\bar{\xi}(x\ohz y-y\ohz x)
=\bar{\xi}(x)\ast_A\bar{\xi}(y)-\bar{\xi}(y)\ast_A\bar{\xi}(x)\\
=&\xi(x)\ast_A\xi(y)-\xi(y)\ast_A\xi(x)
=[\xi(x),\xi(y)]_A = \xi([x,y]) = \bar{\xi}([x,y]).
\end{align*}
Thus $I_{\frakg,\beta}$ is contained in $\ker\,\bar{\xi}$ and $\bar{\xi}$ induces a morphism $\hat{\xi}:\ug\rightarrow A$ of Hom-associative algebras such that
$\bar{\xi} =\hat{\xi} \pi$. Then $\hat{\xi} \phi_0=\hat{\xi} \pi i_\frakg=\bar{\xi}i_\frakg = \xi.$

It remains to prove the uniqueness of $\hat{\xi}$.  Suppose that there exists another morphism $\hat{\xi}^\prime:\ug\rightarrow A$ of Hom-associative algebras such that $\hat{\xi}^\prime \phi_0=\xi$.
We just need to prove $\hat{\xi}(u)=\hat{\xi}^\prime(u)$ for any $u\in \ug$. Since $T^+_h(\frakg)=\bigoplus_{n\geq 1} \frakg^{\ot n}$,
we just need to prove
\begin{equation}
\hat{\xi}\pi(\fraka)=\hat{\xi}^\prime\pi(\fraka)
\mlabel{eq:uniq}
\end{equation}
for $\fraka\in \frakg^{\ot n}$ with $n\geq 1$, for which we apply the induction on $n\geq 1$. For $n=1$, namely for $\fraka\in \frakg$, we have
$$\hat{\xi}\pi(\fraka)=\hat{\xi}\pi i_\frakg(\fraka)
=\bar{\xi}i_\frakg(\fraka)
=\xi(\fraka)
=\hat{\xi}^\prime\phi_0(\fraka)=
\hat{\xi}^\prime\pi(\fraka).
$$
Assume that Eq.~(\mref{eq:uniq}) has been proved for $n\geq 1$. Let $\fraka=\fraka'\ot a_{n+1}\in \frakg^{\ot (n+1)}$, where $\fraka'\in \frakg^{\ot n}$. Since $\hat{\xi}\pi$ and $\hat{\xi}^\prime \pi$ are morphisms of Hom-associative algebras, by Eq.~(\mref{eq:ohini}), the induction hypothesis and the initial step, we have
$$
\hat{\xi}\pi(\fraka)=\hat{\xi}\pi(\fraka'\ohz a_{n+1})
=\hat{\xi}(\pi(\fraka'))\ast_A \hat{\xi} (\pi(a_{n+1}))
=\hat{\xi}^\prime(\pi(\fraka'))\ast_A \hat{\xi}^\prime (\pi(a_{n+1}))
=\hat{\xi}^\prime\pi(\fraka'\ohz a_{n+1})=\hat{\xi}^\prime(\pi(\fraka)).
$$
This completes the inductive proof of the uniqueness of $\hat{\xi}$. Thus $(\ug,\phi_0)$ is a universal enveloping algebra of $\frakg$.

The proof of the uniqueness of $\ug$ up to isomorphism is standard. We include the details for completeness. Suppose that $(\ug_1,\phi_1)$ is another universal enveloping algebra of $\frakg$. By the universal property of $(\ug,\phi_0)$ and $(\ug_1,\phi_1)$, there exist homomorphisms $f:\ug\rightarrow\ug_1$ and
$f_1:\ug_1\rightarrow \ug$ of Hom-associative algebras such that $f  \phi_0=\phi_1$ and $f_1  \phi_1=\phi_0$.
Then
$$(f_1  f)  \phi_0=\phi_0=\id_{\ug}  \phi_0.$$
Since $\id_{\ug}$ and $f_1  f$ are both Hom-associative homomorphisms, by the uniqueness in the universal property of $(\ug,\phi_0)$, we have $f_1  f=\id_{\ug}$. Similarly, $f  f_1=\id_{\ug_1}$. This shows that $f $ and $f_1$ are isomorphisms. Thus $\ug$ is unique up to isomorphism.
\end{proof}

\section{The Poincar\'e-Birkhoff-Witt theorem for involutive Hom-Lie algebras}
\mlabel{sec:pbw}

In this section we prove a Poincar\'e-Birkhoff-Witt (PBW) type theorem, namely Theorem~\mref{thm:ncpbw}, for involutive Hom-Lie algebras.
After giving some motivation and two preparatory results, the statement of the theorem together and its proof modulo the two preparatory results are given in Section~\mref{ss:hompbw}. Proofs of the two preparatory results are provided in Sections~\mref{ss:linsys} and \mref{ss:parpbw} respectively.

\subsection{The Poincar\'e-Birkhoff-Witt theorem}
\mlabel{ss:hompbw}

Let $\frakg$ be a Lie algebra with an ordered basis $X=\{x_i\,|\,i\in \omega \}$ indexed by a well ordered set $\omega$. Let $I_\frakg$ be the ideal of the free associative algebra $T^+(\frakg)$ on $\frakg$ generated by the set
\begin{equation}
 \left\{x\ot y-y\ot x -[x,y]_\frakg \,|\, x, y \in \frakg\right\},
 \mlabel{eq:comlie}
\end{equation}
so that $U_\frakg:=T^+(\frakg)/I_\frakg$ is the universal enveloping algebra of $\frakg$. The Poincar\'e-Birkhoff-Witt (PBW) Theorem states that the linear subspace $I_\frakg$ of $T^+(\frakg)$ has a canonical linear complement which has a basis given by
\begin{equation}
 W:=\{x_{i_1}\ot \cdots \ot x_{i_n}\,|\, i_1\geq \cdots \geq i_n, n\geq 1\},
 \mlabel{eq:pbwbasis}
\end{equation}
called the PBW basis of $U_\frakg$. One of its proofs, as presented in~\cite{Var}, is based on the fact that $I_\frakg$ is linearly generated by the set
\begin{equation}
\left\{\fraka \ot (x\ot y-y\ot x-[x,y]_\frakg)\ot \frakb \,|\, x, y\in X, \fraka\in \frakg^{\ot m}, \frakb\in \frakg^{\ot n}, m, n\geq 0\right\}.
\mlabel{eq:liegen}
\end{equation}
Then the proof is essentially to show that the rewriting system~\cite{BN} from the above linear generators is convergent.

Now let $(\frakg,\beta_\frakg)$ be an involutive Hom-Lie algebra. The tensor algebra like construction $T^+_h(\frakg)$ of the free involutive Hom-associative algebra on $\frakg$ obtained in Theorem~\mref{thm:profree} and the resulting universal enveloping Hom-associative algebra $\ug:=T^+_h(\frakg)/I_{\frakg,\beta}$ in Theorem~\mref{thm:isougd} suggest that the proof of the classical PBW theorem might be adapted to prove a PBW theorem for involutive Hom-Lie algebras. This is the motivation for our approach. But there are obstacles to overcome.

The first obstacle in this approach is the complexicity of the multiplication $T^+_h(\frakg)$. The multiplication in $T^+(\frakg)$ is simply the tensor concatenation product, while the one in $T^+_h(\frakg)$ is not. Further the generating of a Hom-ideal is more complicated then the generating of an ideal for the lack of the associativity.

Thus our first task is express a linear generating system of $I_{\frakg,\beta}$ in terms of the tensor product. We carry out this task in Section~\mref{ss:linsys}, in two steps. We first unravel the iterated multiplications in $I_{\frakg,\beta}$ to show that only two iterations of multiplication is necessary. We then show that the resulting generators, after the application of a suitable twist operator $\varphi$, has a tensor form that is similar to the linear generators in Eq.~(\mref{eq:liegen}) in the Lie algebra case.
This is presented in our first preparatory result, Proposition~\mref{prop:ncideal}, for the PBW theorem of involutive Hom-Lie algebras.

To simplify notations, we denote $\bar{a}:=\beta_\frakg(a)$ for $a\in \frakg$ and more generally $\bar{\fraka}:=\az(\fraka)$ for $\fraka\in \frakg^{\ot n}, n\geq 1$.
We next define a linear operator
\begin{equation}
\varphi:\frakg^{\ot n}\longrightarrow \frakg^{\ot n}, \quad \fraka:=a_1\ot a_2\ot \cdots \ot a_n\mapsto \tilde{a}_1\ot \tilde{a}_2\ot \cdots \ot \tilde{a}_n,
\mlabel{eq:defphi}
\end{equation}
where
$$
\tilde{a}_i=\left\{\begin{array}{ccc}
\bar{a_i},& \quad \quad i=2k+1~ \text{and}~ k\geq 1;\\
a_i,&\text{otherwise}.
\end{array}\right.
$$
In other words, $\varphi$ involutes the 3rd, 5th, 7th, $\cdots$, tensor factor of a pure tensor $\fraka$.

\begin{prop}
Let $(\frakg,[\,,\,]_\frakg,\beta_\frakg)$ be an involutive Hom-Lie algebra. Let $\varphi:T^+_h(\frakg)\to T^+_h(\frakg)$ be as above. Let $I_{\frakg,\beta}$ be the Hom-ideal of $T^+_h(\frakg)$ defined in Theorem~\mref{thm:isougd} such that $\ug=T^+_h(\frakg)/I_{\frakg,\beta}$ is the universal enveloping algebra of $\frakg$. Denote
\begin{equation}
J_{\frakg,\beta}:=
\sum_{i,j\geq0}\sum_{\substack{\fraka\in \frakg^{\ot i}\\\frakb\in \frakg^{\ot j}}} \sum_{x,y\in \frakg}  \bigg( \fraka\ot(x\ot y-y\ot x )\ot \frakb - \az(\fraka)\ot [x,y]_\frakg \ot \az(\frakb)\bigg).
\mlabel{eq:Jgb}
\end{equation}
Then
$$
\varphi(I_{\frakg,\beta})=J_{\frakg,\beta}
$$
\mlabel{prop:ncideal}
\end{prop}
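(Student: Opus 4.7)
The plan is to prove both inclusions $\varphi(I_{\frakg,\beta}) \subseteq J_{\frakg,\beta}$ and $J_{\frakg,\beta} \subseteq \varphi(I_{\frakg,\beta})$ by identifying a concrete linear spanning set for $I_{\frakg,\beta}$ and computing its image under $\varphi$.

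First, I would show that $I_{\frakg,\beta}$ is $\bfk$-linearly spanned by elements of the canonical form $(\fraka \ohz r) \ohz \frakb$, where $r = x \ot y - y \ot x - [x,y]_\frakg$ is one of the defining generators and $\fraka, \frakb \in T^+_h(\frakg) \cup \{\emptyset\}$ (with the convention $\emptyset \ohz w = w \ohz \emptyset = w$). The key tool is Hom-associativity $\az(u) \ohz (v \ohz w) = (u \ohz v) \ohz \az(w)$ combined with $\az^2 = \id$. Right multiplication yields
$((\fraka \ohz r) \ohz \frakb) \ohz c = (\az(\fraka) \ohz \az(r)) \ohz (\frakb \ohz \az(c))$
in one step; left multiplication requires two applications to give
$c \ohz ((\fraka \ohz r) \ohz \frakb) = ((c \ohz \fraka) \ohz \az(r)) \ohz \az(\frakb)$;
and closure under $\az$ follows from multiplicativity, $\az((\fraka \ohz r) \ohz \frakb) = (\az(\fraka) \ohz \az(r)) \ohz \az(\frakb)$. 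In each case the output stays in canonical form because $\az(r)$ is again a generator of the same type (with $x, y$ replaced by $\bar{x}, \bar{y}$).

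Next, using Lemma~\mref{lem:ohexp}, I would expand $(\fraka \ohz r) \ohz \frakb$ into three pure tensors (one from each summand of $r$), each decorated by $\az$'s at specific positions determined by the parities of $m = |\fraka|$ and $n = |\frakb|$ (via the factor $\az^{n-1}$ and the ``inner'' $\az$ in the formula). Applying $\varphi$, which involutes precisely the positions at odd indices $\geq 3$, and using $\az([x,y]_\frakg) = [\bar{x}, \bar{y}]_\frakg$ from Eq.~(\mref{eq:hommult}), a parity analysis shows that the result takes the form
$$\fraka' \ot (x' \ot y' - y' \ot x') \ot \frakb' - \az(\fraka') \ot [x', y']_\frakg \ot \az(\frakb'),$$
where $(\fraka', \frakb', x', y')$ is obtained from $(\fraka, \frakb, x, y)$ by a linear bijection of $\frakg^{\ot m} \times \frakg^{\ot n} \times \frakg \times \frakg$ that depends on the parities of $m$ and $n$ (and is built from tensor powers of $\az$ acting componentwise). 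As $(\fraka, \frakb, x, y)$ ranges over all such quadruples, so does $(\fraka', \frakb', x', y')$; hence $\varphi$ maps the spanning set of $I_{\frakg,\beta}$ onto the spanning set of $J_{\frakg,\beta}$, delivering both inclusions simultaneously.

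The main technical obstacle is the parity bookkeeping in the second step: up to four sub-cases arise from the parities of $m$ and $n$, each requiring one to track how the $\az$-decorations produced by $\az^{n-1}$ combine with those introduced by $\varphi$, and to confirm that the three pure-tensor summands regroup exactly into the block $\fraka' \ot (x' \ot y' - y' \ot x') \ot \frakb'$ and the block $-\az(\fraka') \ot [x', y']_\frakg \ot \az(\frakb')$. The edge cases where $\fraka$ or $\frakb$ has length $0$ or $1$ invoke the base-case formula (\mref{eq:ohini}) in place of (\mref{eq:ohexp}) but are handled analogously. Once the case analysis is complete, the bijection argument collapses everything into the stated equality of submodules.
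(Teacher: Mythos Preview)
Your proposal is correct and follows essentially the same two-step strategy as the paper: first reduce $I_{\frakg,\beta}$ to the linear span of elements $(\fraka\ohz r)\ohz\frakb$ via Hom-associativity and multiplicativity of $\az$, then compute $\varphi$ on these and match with the generators of $J_{\frakg,\beta}$ via a bijection argument. The only difference is in execution: instead of your anticipated four-case parity analysis, the paper isolates a small lemma on $\varphi$ (if $\ell(\fraku)=\ell(\frakv)+1$ then $\varphi(\fraku\ot\frakw)=\varphi(\fraku)\ot\frakc$ and $\varphi(\frakv\ot\frakw)=\varphi(\frakv)\ot\az(\frakc)$ for a common $\frakc$), which exactly captures the length-one discrepancy between the $x\ot y$ and $[x,y]_\frakg$ pieces and eliminates the case split.
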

This proposition will be rephrased as Proposition~\mref{prop:ncideal1} and proved in Section~\mref{ss:linsys}.

Our second challenge in adapting the rewriting system proof of the classical PBW Theorem is that there might not be a basis of $\frakg$ that is stable under the action of the Hom map $\beta_\frakg$, but only does so up to a sign. In order to give a uniform approach to include the possible cases, we first prove the following PBW decomposition which includes a parameter, applicable to the various cases of $\beta_\frakg$ by taking various values of the parameter, eventually leading to the proof of the Hom-PBW Theorem~\mref{thm:ncpbw}.

\begin{theorem} $(\text{\rm PBW Decomposition with a Parameter})$
Let $(\frakg,[\,,\,]_\frakg,\beta_\frakg)$ be an involutive Hom-Lie algebra such that $\beta_\frakg(X)=X$ for a well-ordered basis $X$ of $\frakg$. Let $W$ be as defined in Eq.~(\mref{eq:pbwbasis}). Let $\mu\in \bfk$ be given.
Denote
\begin{equation}
J_{\frakg,\beta,\mu}:=
\sum_{i,j\geq0}\sum_{\substack{\fraka\in \frakg^{\ot i}\\\frakb\in \frakg^{\ot j}}} \sum_{x,y\in \frakg}  \bigg( \fraka\ot(x\ot y-y\ot x )\ot \frakb - \mu^{i+j}\az(\fraka)\ot [x,y]_\frakg \ot \az(\frakb)\bigg)
\mlabel{eq:jgbeta}
\end{equation}
Then we have the decomposition
\begin{equation}
T^+_h(\frakg)=J_{\frakg,\beta,\mu}\oplus \bfk W.
\end{equation}
\mlabel{thm:genpbw}
\end{theorem}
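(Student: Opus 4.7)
The plan is to prove the claimed direct sum decomposition in two steps: spanning, $T^+_h(\frakg) = J_{\frakg,\beta,\mu} + \bfk W$, and trivial intersection, $J_{\frakg,\beta,\mu}\cap \bfk W = \{0\}$. For the spanning step I would induct on the lexicographic pair $(n,r)$, where $n$ is the tensor length and $r$ is the number of inversions in a pure tensor $x_{i_1}\ot\cdots\ot x_{i_n}$ built from the basis $X$. If the tensor already lies in $W$ there is nothing to prove; otherwise pick an adjacent inversion at positions $p,p+1$ (so $x_{i_p}<x_{i_{p+1}}$) and apply the generator of $J_{\frakg,\beta,\mu}$ with $\fraka=x_{i_1}\ot\cdots\ot x_{i_{p-1}}$ and $\frakb=x_{i_{p+2}}\ot\cdots\ot x_{i_n}$. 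Modulo $J_{\frakg,\beta,\mu}$ the tensor is then replaced by the sum of $\fraka\ot x_{i_{p+1}}\ot x_{i_p}\ot\frakb$ (same length, strictly smaller inversion count) and $\mu^{n-2}\,\bar{\fraka}\ot [x_{i_p},x_{i_{p+1}}]_\frakg\ot\bar{\frakb}$ (strictly smaller length once the bracket is expanded in the basis). The hypothesis $\beta_\frakg(X)=X$ is exactly what keeps $\bar{\fraka}$ and $\bar{\frakb}$ inside the $\bfk$-span of pure $X$-tensors, so the induction stays in this framework and terminates in $\bfk W$.

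For the trivial-intersection step my plan is to construct a $\bfk$-linear normal-form map $p : T^+_h(\frakg) \to \bfk W$ satisfying $p|_{\bfk W}=\id$ and $p(J_{\frakg,\beta,\mu})=0$; once this is available, any $w\in\bfk W\cap J_{\frakg,\beta,\mu}$ forces $w=p(w)=0$. I would define $p$ on pure tensors by recursion on the same $(n,r)$: set $p$ equal to the identity on singletons and on tensors already in $W$, and on any other pure tensor let $p$ equal the value obtained by performing one straightening rewrite at a chosen inversion and then applying $p$ to the two resulting summands of strictly smaller $(n,r)$. Termination is guaranteed by well-foundedness of this induction.

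The crucial point, and the one I expect to be the main obstacle, is showing this recursive definition is independent of the choice of inversion; only then does $p$ annihilate every linear generator of $J_{\frakg,\beta,\mu}$ on the nose. By a Bergman-style diamond argument well-definedness reduces to confluence at two kinds of critical overlaps. The disjoint-inversion overlap commutes by direct bookkeeping, once one notes that each swap carries the factor $\mu^{i+j}$ equal to the length of its outer context and that $\az$ commutes with tensor concatenation by~(\mref{eq:alpcom}). The harder overlap is at three consecutive basis positions $x,y,z$: rewriting the left pair first versus the right pair first yields two expressions whose difference, after subsequent straightening, collapses to a sum of terms of the form $\bar{\fraka}\ot[[x,y]_\frakg,\bar{z}]_\frakg\ot\bar{\frakb}$, $\bar{\fraka}\ot[\bar{y},[x,z]_\frakg]_\frakg\ot\bar{\frakb}$ and $\bar{\fraka}\ot[[y,z]_\frakg,\bar{x}]_\frakg\ot\bar{\frakb}$ with a common $\mu$-weight and matching $\az$-twists on the outer factors. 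Reconciling them comes down to the Hom-Jacobi identity~(\mref{eq:homjacobi}); the involutivity $\beta_\frakg^2=\id$ together with the $\beta_\frakg$-multiplicativity~(\mref{eq:hommult}) of the bracket is what lets each middle bracket be put in the shape $[\beta_\frakg(-),[-,-]_\frakg]_\frakg$ with the $\mu$-weights and outer twists aligned so that Hom-Jacobi applies termwise. Once $p$ is well-defined, $p(J_{\frakg,\beta,\mu})=0$ on generators is immediate from the construction, giving $\bfk W\cap J_{\frakg,\beta,\mu}=\{0\}$ and hence the direct sum $T^+_h(\frakg)=J_{\frakg,\beta,\mu}\oplus \bfk W$.
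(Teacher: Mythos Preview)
Your proposal is correct and follows essentially the same route as the paper's proof. The paper also splits into spanning and trivial-intersection parts, proves spanning by the same double induction on length and inversion count, and for the intersection constructs the same recursive normal-form operator (called $L$ there), verifying well-definedness by checking exactly the two overlap cases you describe---disjoint inversions via bookkeeping and adjacent triple inversions via the Hom-Jacobi identity, with the hypothesis $\beta_\frakg(X)=X$ used in the same way.
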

This theorem will restated as Theorem~\mref{thm:genpbw1} and proved in Section~\mref{ss:parpbw}.

With these two preparatory results at our disposal, we can now state and prove our Poincar\'e-Birkhoff-Witt theorem for involutive Hom-Lie algebras.

\begin{theorem}
Let $\bfk$ be a field whose characteristic is not 2. $(\frakg,[\,,\,]_\frakg,\beta_\frakg)$ be an involutive Hom-Lie algebra on $\bfk$. Let $\varphi:T^+_h(\frakg)\to T^+_h(\frakg)$ be as in Eq.(\mref{eq:defphi}). Let $I_{\frakg,\beta}$ be the Hom-ideal of $T^+_h(\frakg)$ generated by the commutators defined in Theorem~\mref{thm:isougd}. Let $J_{\frakg,\beta}$ be as defined in Eq.(\mref{eq:Jgb}). Then there is a well-ordered basis $X$ of $\frakg$ such that, for
$$ W:=W_X:=\{x_{i_1}\ot \cdots \ot x_{i_n}\,|\, i_1\geq \cdots \geq i_n, n\geq 1\},$$
the following statements hold.
\begin{enumerate}
\item
\begin{equation}
T^+_h(\frakg)=J_{\frakg,\beta}\oplus \bfk W.
\end{equation}
\mlabel{it:ncpbw2}
\item
{\bf (PBW Theorem)} $\varphi(W)$ is a basis of $T^+_h(\frakg)/I_{\frakg,\beta}$.
\mlabel{it:ncpbw3}
\end{enumerate}
\mlabel{thm:ncpbw}
\end{theorem}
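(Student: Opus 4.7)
The plan is to deduce both parts of Theorem~\ref{thm:ncpbw} from the two preparatory results, Theorem~\ref{thm:genpbw} and Proposition~\ref{prop:ncideal}, with the key technical bridge being that $\varphi$ is a linear involution of $T^+_h(\frakg)$.

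The first step is to produce a well-ordered basis $X$ of $\frakg$ satisfying $\beta_\frakg(X) = X$ as a set, so that the hypothesis of Theorem~\ref{thm:genpbw} can be met. Since $\mathrm{char}\,\bfk \neq 2$ and $\beta_\frakg^2 = \id$, the Hom-module $\frakg$ splits as $\frakg_+ \oplus \frakg_-$ into the $\pm 1$-eigenspaces of $\beta_\frakg$. A $\beta_\frakg$-stable basis can then be assembled from vectors in $\frakg_+$ fixed individually by $\beta_\frakg$, together with two-element orbits of the form $\{u+v,\, u-v\}$ with $u\in\frakg_+$, $v\in\frakg_-$, since $\beta_\frakg$ swaps the two members of each such orbit. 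Any well-ordering on the resulting set supplies the desired $X$.

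With this $X$ in hand, specialising Theorem~\ref{thm:genpbw} at $\mu = 1$ yields $T^+_h(\frakg) = J_{\frakg,\beta,1} \oplus \bfk W = J_{\frakg,\beta} \oplus \bfk W$, which is part~(a). For part~(b), observe that $\varphi$ preserves each tensor power $\frakg^{\otimes n}$ and there applies $\beta_\frakg$ to the odd-indexed factors beyond the first, so $\beta_\frakg^2 = \id$ forces $\varphi^2 = \id$ and makes $\varphi$ a $\bfk$-linear automorphism of $T^+_h(\frakg)$. Applying $\varphi$ to the decomposition in part~(a) and invoking Proposition~\ref{prop:ncideal} (using $\varphi^{-1} = \varphi$ to rewrite $\varphi(I_{\frakg,\beta}) = J_{\frakg,\beta}$ as $\varphi(J_{\frakg,\beta}) = I_{\frakg,\beta}$) gives
$$T^+_h(\frakg) \;=\; \varphi(J_{\frakg,\beta}) \oplus \bfk\,\varphi(W) \;=\; I_{\frakg,\beta} \oplus \bfk\,\varphi(W),$$
so that $\varphi(W)$ projects bijectively onto a basis of the quotient $T^+_h(\frakg)/I_{\frakg,\beta} = \ug$, which is part~(b).

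The main obstacle I expect is the basis construction step: when the eigenspace dimensions do not match (for instance when $\dim\frakg_- > \dim\frakg_+$), the naive pairing recipe above runs out of partners from $\frakg_+$, and producing a $\beta_\frakg$-stable basis requires a more careful combinatorial choice — or, alternatively, a preliminary reduction that applies Theorem~\ref{thm:genpbw} with different values of $\mu$ on the $+$ and $-$ eigenspaces and then reassembles the global decomposition. Once $X$ is secured, the remainder of the argument is a purely formal consequence of Proposition~\ref{prop:ncideal} together with the involutivity of $\varphi$.
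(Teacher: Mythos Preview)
Your overall architecture matches the paper's proof exactly: build a $\beta_\frakg$-stable well-ordered basis $X$, invoke Theorem~\ref{thm:genpbw} at a suitable $\mu$ for part~(\ref{it:ncpbw2}), then apply the involution $\varphi$ together with Proposition~\ref{prop:ncideal} for part~(\ref{it:ncpbw3}). Your treatment of part~(\ref{it:ncpbw3}) is complete and identical to the paper's.

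The gap is precisely the obstacle you flag, and neither of your proposed fixes works as stated. When $|\dim\frakg_-|>|\dim\frakg_+|$, your first suggestion---a more careful combinatorial choice of basis---is impossible: any $X$ with $\beta_\frakg(X)=X$ decomposes into fixed points (which lie in $\frakg_+$) and two-element orbits $\{x,\beta_\frakg(x)\}$, and each such orbit contributes exactly one dimension to each eigenspace via $x\pm\beta_\frakg(x)$; hence a $\beta_\frakg$-stable basis forces $\dim\frakg_+\geq\dim\frakg_-$. Your second suggestion---different values of $\mu$ on the two eigenspaces---also fails, since Theorem~\ref{thm:genpbw} requires one global $\mu$ and the bracket mixes the eigenspaces.

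The paper's resolution is to observe that $(\frakg,[\,,\,]_\frakg,\gamma)$ with $\gamma:=-\beta_\frakg$ is again an involutive Hom-Lie algebra, now with the eigenspaces swapped, so the pairing construction does produce a $\gamma$-stable basis $X$. One then applies Theorem~\ref{thm:genpbw} to \emph{this} Hom-Lie algebra with $\mu=-1$: since the tensor extension of $\gamma$ on $\frakg^{\otimes i}$ equals $(-1)^i\alpha_T$, the factor $(-1)^{i+j}$ in $J_{\frakg,\gamma,-1}$ cancels the signs, yielding $J_{\frakg,-\beta,-1}=J_{\frakg,\beta}$ and hence the desired decomposition with this same $X$. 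So the parameter $\mu$ is there precisely to absorb a global sign change in the Hom map, not to vary across eigenspaces.
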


\begin{proof}
(\mref{it:ncpbw2}) Let $\frakg_+$ and $\frakg_-$ be the eigenspace of $1$ and $-1$ of $\frakg$ respectively. Then from $x=\frac{\beta(x)+x}{2}+\frac{\beta(x)-x}{2}\in \frakg_++\frakg_-$ we have the decomposition
$$ \frakg=\frakg_+\oplus \frakg_-.$$
Let $Y_+$ and $Y_-$ be a basis of $\frakg_+$ and $\frakg_-$ respectively. Let $|Z|$ denote the cardinality of a set $Z$. We proceed with the proof in two cases:

\noindent
{\bf Case 1. ~\,$|Y_+|\geq |Y_-|$.~}
Fix an injection $\tau:Y_-\to Y_+$. Then the set
$$ X:=\{ \tau(x)+x, \tau(x)-x\,|\, x\in Y_-\}\cup (Y_+\backslash Y_-)$$ is a basis of $\frakg$ and $\beta_\frakg(X)=X$ since $\beta_\frakg(\tau(x)\pm x)=\tau(x)\mp x$. Let $W$ be defined with respect to a given well-order on $X$.
Take ~$\mu=1$ in Eq.~(\mref{eq:jgbeta}). Then by Eq.~(\mref{eq:Jgb}), $J_{\frakg,\beta,1}=J_{\frakg,\beta}$. By Theorem~\mref{thm:genpbw}, $T^+_h(\frakg)=J_{\frakg,\beta,1}\oplus \bfk W$. Thus $T^+_h(\frakg)=J_{\frakg,\beta}\oplus \bfk W$.
\smallskip

\noindent
{\bf Case 2.  ~\,$|X_+|\leq |X_-|$.~} Let $\gamma:=-\beta_\frakg$. Then it is direct to check that $(\frakg,[\,,\,]_\frakg,\gamma)$ is also an involutive Hom-Lie algebra. Further for this involutive Hom-Lie algebra, we are in Case 1. Take $\beta$ to be $\gamma$ and take ~$\mu=-1$ in Eq.~(\mref{eq:jgbeta}). Then by Eq.~(\mref{eq:Jgb}),  $J_{\frakg,\gamma,-1}=J_{\frakg,-\beta,-1}=J_{\frakg,\beta}$. Then by Theorem~\mref{thm:genpbw}, we obtain $$T^+_h(\frakg)=J_{\frakg,-\beta,-1}\oplus \bfk W=J_{\frakg, \beta}\oplus \bfk W.$$
\smallskip

\noindent
(\mref{it:ncpbw3}) follows from Proposition~\mref{prop:ncideal} and Item (\mref{it:ncpbw2}):
$$ T^+_h(\frakg)=\varphi(T^+_h(\frakg))=\varphi(J_{\frakg,\beta}\oplus \bfk W)=\varphi(J_{\frakg,\beta})\oplus \bfk \varphi(W)
=I_{\frakg,\beta}\oplus \bfk \varphi(W).$$
\end{proof}

\subsection{Tensor representation of the Hom-ideal $I_{\frakg,\beta}$}
\mlabel{ss:linsys}

In this subsection we study linear generators of the Hom-ideal $I_{\frakg,\beta}$ and express them in terms of the tensor product.

From $\beta_\frakg^2=\id$, $\beta_\frakg$ is bijective. So
\begin{equation}
\beta_\frakg(\frakg)=\frakg.\mlabel{eq:betag}
\end{equation}
It also gives $\varphi^2=\id$. So we also have
\begin{equation}
\varphi(\frakg^{\ot n})=\frakg^{\ot n}.\mlabel{eq:phifrakg}
\end{equation}

We next give some properties for the linear operator $\az$ and the multiplication $\ohz$.
First by Eqs.~(\mref{eq:defalpha0}) and~(\mref{eq:betag}), we obtain
\begin{equation}
\az^m(\frakg^{\ot n})=\frakg^{\ot n}, m\geq 0, n\geq 1. \mlabel{eq:alphbn}
\end{equation}
Then for any natural numbers $r, s\geq 1$, by Eq.(\mref{eq:ohexp}) we have
\begin{equation}
\frakg^{\ot r} \ohz \frakg^{\ot s}= \az^{s-1}(\frakg^{\ot r})\ot \frakg\ot \az(\frakg^{\ot (s-1)}) = \frakg^{\ot r+s}. \mlabel{eq:otsum}
\end{equation}

For $\fraka:=a_1\ot a_2\ot \cdots\ot a_n\in \frakg^{\ot n}$ and $n\geq 1$, we denote $\ell(\fraka):=n$.
\begin{lemma}
Let $\fraku:=u_1\ot u_2\ot \cdots\ot u_m\in \frakg^{\ot m}, \frakv:=v_1\ot v_2\ot \cdots\ot v_k\in \frakg^{\ot k}$ and $\frakw:=w_1\ot w_2\ot \cdots\ot w_n\in \frakg^{\ot m}$. Let $\varphi$ be defined as in Eq.~(\mref{eq:defphi}).
Then
\begin{enumerate}
\item
$\varphi(\fraku)=u_1\ot u_2\ot \az(u_3) \ot \az^2(u_4) \ot \cdots\ot \az^{m-2}(u_m)=u_1\ot \ot_{k=2}^m \az^{k-2}(u_k).
$
\mlabel{it:phi1}
\item $\varphi(\az(\fraku))=\az(\varphi(\fraku)).$
\mlabel{it:phi2}
\item
$\varphi(\fraku\ot \frakw)=\varphi(\fraku)\ot \az^{m-1}(w_1)\ot \cdots \ot\az^{m+n-2}(w_n)$.
\mlabel{it:phi3p}
\item
If $\ell(v)\geq 1$ and $\ell(u)=\ell(v)+1$, i.e., $m=k+1$,  then there exists  $\frakc\in \frakg^{\ot n}$ such that
$$\varphi(\fraku \ot \frakw)= \varphi(\fraku)\ot \frakc \quad \text{and}\quad \varphi(\frakv\ot \frakw)= \varphi(\frakv)\ot \az(\frakc).$$
\mlabel{it:phi3}
\end{enumerate}
\mlabel{lem:phi}
\end{lemma}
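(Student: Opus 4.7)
The whole lemma is bookkeeping built on three facts already available: $\az$ applies $\beta_\frakg$ componentwise to any pure tensor; $\az^2 = \id$; and $\beta_\frakg$ commutes with itself. For part (\mref{it:phi1}), I would unpack $\az^{k-2}$ applied to the $k$-th factor: when $k$ is even with $k \geq 2$ the exponent $k-2$ is even, so $\az^{k-2} = \id$ by involutivity, and when $k$ is odd with $k \geq 3$ the exponent is odd, so $\az^{k-2}$ reduces to $\beta_\frakg$ on a single factor. This is exactly the case distinction in the definition of $\varphi$ in Eq.~(\mref{eq:defphi}), so the explicit product formula in (\mref{it:phi1}) agrees with the definition. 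Part (\mref{it:phi2}) then follows immediately: applying part (\mref{it:phi1}) to $\az(\fraku)$ and to $\varphi(\fraku)$, each factor in the two expressions becomes a composite of powers of $\beta_\frakg$, and these commute with each other, giving equality term by term.

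For part (\mref{it:phi3p}), I would view $\fraku \ot \frakw$ as a single pure tensor of length $m+n$ and apply the formula from part (\mref{it:phi1}). The first $m$ factors reassemble into $\varphi(\fraku)$, while for $1 \leq j \leq n$ the $(m+j)$-th factor $w_j$ is twisted by $\az^{m+j-2}$, yielding the stated tail. For part (\mref{it:phi3}), I would apply part (\mref{it:phi3p}) to both $\fraku \ot \frakw$ and $\frakv \ot \frakw$ and define
$$\frakc := \az^{k}(w_1) \ot \az^{k+1}(w_2) \ot \cdots \ot \az^{k+n-1}(w_n),$$
which is exactly the tail appearing in $\varphi(\fraku \ot \frakw)$ since $m-1 = k$. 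Componentwise application of $\az$ shifts each exponent up by one, and invoking $\az^2 = \id$ lets me rewrite each $\az^{k+j}$ as $\az^{k+j-2}$, so that $\az(\frakc)$ becomes the tail for $\varphi(\frakv \ot \frakw)$, as required.

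The main obstacle, such as it is, lies in the index bookkeeping in part (\mref{it:phi3}): one must track carefully that decreasing the length of the left factor from $m = k+1$ down to $k$ shifts every $\az$-exponent on the right by a single step, and that $\az^2 = \id$ is what allows the shifted tail to be rewritten as $\az(\frakc)$ rather than as some nominal $\az^{-1}(\frakc)$. No deeper structural ingredient beyond the involutivity of $\beta_\frakg$ enters the argument.
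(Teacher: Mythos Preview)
Your proposal is correct and follows essentially the same route as the paper: both use $\az^2=\id$ to reduce the definition of $\varphi$ to the formula in (\mref{it:phi1}), derive (\mref{it:phi2}) and (\mref{it:phi3p}) directly from that formula, and for (\mref{it:phi3}) define $\frakc$ as the tail $\az^{m-1}(w_1)\ot\cdots\ot\az^{m+n-2}(w_n)$ appearing in (\mref{it:phi3p}), then compare with the corresponding tail for $\frakv$ using $\az^2=\id$. The only difference is cosmetic: the paper observes directly that the $\frakv$-tail has each exponent one lower and hence equals $\az(\frakc)$, whereas you shift up and then reduce by two, which is the same computation read in the other direction.
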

\begin{proof}
\noindent
(\mref{it:phi1}).
This follows from the definition of $\varphi$ since $\az^2=\id$.
\smallskip

\noindent
(\mref{it:phi2}).
By Item~(\mref{it:phi1}), we get
\begin{align*}
\varphi\big(\az(\fraku)\big)
=&\varphi\big(\az(u_1)\ot\az(u_2)\ot \cdots \ot \az(u_{n-2})\ot \az(u_{n-1})\ot \az(u_n)\big)
=\az(u_1)\ot \ot_{k=2}^m \az^{k-2} (\az(u_k)) \\
=& \az(u_1)\ot \ot_{k=2}^m \az^{k-1}(u_k)
=\az\big(u_1\ot \ot_{k=2}^m \az^{k-2}(u_k)\big)
=\az (\varphi(\fraku)).
\end{align*}

\noindent
(\mref{it:phi3p}).
By Item~(\mref{it:phi1}), we obtain
\begin{eqnarray*}
&&\varphi(\fraku\ot \frakw)\\
&=&\varphi(u_1\ot u_2\ot u_3\ot \cdots \ot u_m\ot w_1\ot\cdots\ot w_n)\\
&=&u_1\ot u_2\ot \az(u_3)\ot \cdots\ot \az^{m-2}(u_m)\ot \az^{m-1}(w_1)\ot \cdots \ot \az^{m+n-2}(w_n)\\
&=& \varphi(\fraku) \ot \az^{m-1}(w_1) \ot \cdots \ot \az^{m+n-2}(w_n).
\end{eqnarray*}

\noindent
(\mref{it:phi3}). In Item~(\mref{it:phi3p}), denote $\frakc:=\az^{m-1}(w_1) \ot \cdots \ot \az^{m+n-2}(w_n)$. Then $\frakc$ is in $\frakg^{\ot n}$ and Item~(\mref{it:phi3p}) becomes
$$\varphi(\fraku\ot \frakw)=\varphi(\fraku)\ot \frakc.$$
Since $k=m-1$, Item~(\mref{it:phi3p}) also gives
$$\varphi(\frakv\ot \frakw)
= \varphi(\frakv) \ot \az^{m-2}(w_1)\ot \cdots\ot \az^{m+n-3}(w_n)= \varphi(\frakv) \ot \az(\frakc).
$$
\end{proof}

We now prove Proposition~\mref{prop:ncideal} with some additional information.
\begin{prop} $\mathrm{(=Proposition~\mref{prop:ncideal})}$
Let $(\frakg,[\,,\,]_\frakg,\beta_\frakg)$ be an involutive Hom-Lie algebra. Let $\varphi:T^+_h(\frakg)\to T^+_h(\frakg)$ be as defined in Eq.~$($\mref{eq:defphi}$)$. Let $I_{\frakg,\beta}$ be the Hom-ideal of $T^+_h(\frakg)$ defined in Theorem~\mref{thm:isougd}. Let
\begin{equation}
J_{\frakg,\beta}:=
\sum_{i,j\geq0}\sum_{\substack{\fraka\in \frakg^{\ot i}\\\frakb\in \frakg^{\ot j}}} \sum_{x,y\in \frakg}  \bigg( \fraka\ot(x\ot y-y\ot x )\ot \frakb - \az(\fraka)\ot [x,y]_\frakg \ot \az(\frakb)\bigg).
\mlabel{eq:Jgb1}
\end{equation}
Then
\begin{enumerate}
\item
\begin{equation}
I_{\frakg,\beta}=\sum_{i,j\geq 0}\sum_{x,y\in \frakg} \Big(\frakg^{\ot i}\ohz (x\ot y-y\ot x -[x,y]_{\beta} )\Big)\ohz \frakg^{\ot j}.
\mlabel{eq:Ideal-Ic}
\end{equation}
\mlabel{it:ncpbw0}
\item
$\varphi(I_{\frakg,\beta})=J_{\frakg,\beta}.$
\mlabel{it:ncpbw1}
\end{enumerate}
\mlabel{prop:ncideal1}
\end{prop}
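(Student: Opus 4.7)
For part (a), write $I'$ for the right-hand side and abbreviate $r_{x,y}:=x\ot y-y\ot x-[x,y]_\frakg$. The inclusion $I'\subseteq I_{\frakg,\beta}$ is immediate since each $r_{x,y}$ lies in $I_{\frakg,\beta}$ and a Hom-ideal is closed under left and right $\ohz$-multiplication. For the reverse, my plan is to verify that $I'$ is itself a Hom-ideal containing every $r_{x,y}$; minimality of $I_{\frakg,\beta}$ then forces $I_{\frakg,\beta}\subseteq I'$. The generators appear as the case $i=j=0$. Closure of $I'$ under $\az$ uses multiplicativity of $\az$ (Eq.~(\mref{eq:ohcom})) together with the identity $\az(r_{x,y})=r_{\bar x,\bar y}$, which comes from the Hom-Lie compatibility Eq.~(\mref{eq:hommult}). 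Closure under left multiplication by $\fraku\in\frakg^{\ot k}$ follows from Hom-associativity (Eq.~(\mref{eq:homass1})) and $\az^2=\id$: two applications of $\az(A)\ohz(B\ohz C)=(A\ohz B)\ohz\az(C)$ yield
\[\fraku\ohz\big((\fraka\ohz r_{x,y})\ohz\frakb\big)=\big((\fraku\ohz\fraka)\ohz r_{\bar x,\bar y}\big)\ohz\az(\frakb),\]
again of the required shape; closure under right multiplication is symmetric.

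For part (b), I would invoke (a) to represent an arbitrary element of $I_{\frakg,\beta}$ as a sum of generators $(\fraka\ohz r_{x,y})\ohz\frakb$ and compute $\varphi$ on each. Expanding by Lemma~\mref{lem:ohexp} gives
\[\fraka\ohz(x\ot y)=\az(\fraka)\ot x\ot\bar y,\quad \fraka\ohz(y\ot x)=\az(\fraka)\ot y\ot\bar x,\quad \fraka\ohz[x,y]_\frakg=\fraka\ot[x,y]_\frakg,\]
so the two commutator halves of $\fraka\ohz r_{x,y}$ have length $i+2$ while the bracket half has length $i+1$. Applying Lemma~\mref{lem:phi}(d) to the pairs $(\fraka\ot x\ot y,\fraka\ot[x,y]_\frakg)$ and $(\fraka\ot y\ot x,\fraka\ot[x,y]_\frakg)$ against the right tail $\frakb$ shows that the three resulting $\varphi$-images share a common $\frakb$-tail up to one application of $\az$. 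Combining this with Lemma~\mref{lem:phi}(a)--(b) to account for the $\az$'s deposited by both $\ohz$ and $\varphi$, and with Eq.~(\mref{eq:hommult}) to move $\az$'s through the bracket, one rewrites $\varphi\big((\fraka\ohz r_{x,y})\ohz\frakb\big)$ in the form
\[\fraka'\ot(x'\ot y'-y'\ot x')\ot\frakb'-\az(\fraka')\ot[x',y']_\frakg\ot\az(\frakb'),\]
for $\fraka',\frakb',x',y'$ obtained from $\fraka,\frakb,x,y$ by appropriate powers of $\az$. This is a generator of $J_{\frakg,\beta}$, giving $\varphi(I_{\frakg,\beta})\subseteq J_{\frakg,\beta}$; the reverse inclusion then follows from $\varphi^2=\id$ and the bijectivity of $\beta_\frakg$ on $\frakg$, which make the $\az$-reparametrization invertible.

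The main technical obstacle is the bookkeeping of powers of $\az$ at individual tensor positions: both $\ohz$ (via Lemma~\mref{lem:ohexp}) and $\varphi$ itself (via Lemma~\mref{lem:phi}(a)) deposit $\az$'s whose exponents depend on the parities of $i$, $j$, and the position. Lemma~\mref{lem:phi}(d) is designed precisely for this setting: it reduces the analysis of the three summands of $(\fraka\ohz r_{x,y})\ohz\frakb$---two of length $i+j+2$ and one of length $i+j+1$---to the uniform statement that their $\varphi$-images share a common head and have $\frakb$-tails differing by a single $\az$, which is exactly the shape of a $J_{\frakg,\beta}$ generator.
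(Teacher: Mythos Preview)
Your proposal is correct and follows essentially the same approach as the paper: part~(a) is argued identically (show the right-hand side is a Hom-ideal via Hom-associativity and $\az(r_{x,y})=r_{\bar x,\bar y}$), and part~(b) likewise expands $(\fraka\ohz r_{x,y})\ohz\frakb$ via Lemma~\mref{lem:ohexp}, applies Lemma~\mref{lem:phi} (especially item~(\mref{it:phi3})) to handle the length-$(i{+}2)$ versus length-$(i{+}1)$ heads, and inverts the $\az$-reparametrization for the reverse inclusion. The only cosmetic slip is that after the right $\ohz\frakb$ the common tail is $b_1\ot\az(b_2\ot\cdots\ot b_j)$ rather than $\frakb$ itself, but this is immaterial since $\az$ is bijective.
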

\begin{proof}
(\mref{it:ncpbw0}) 
Since the Hom-ideal $I_{\frakg,\beta}$ is generated by the elements of the form ~$x\ot y-y\ot x-[x,y]_\frakg, x,y\in \frakg,$
the right hand side is contained in the left hand side. To prove that the left hand side is contained in the right hand side, we just need to prove that the right hand side is a Hom-ideal of $T^+_h(\frakg)$, that is, it is closed under the left and right multiplication, and the operator $\az$. So we will check these conditions one by one.

For any nature number $k\geq 0$, we have
\begin{eqnarray*}
&&\Big(\big(\frakg^{\ot i}\ohz (x\ot y-y\ot x -[x,y]_{\frakg})\big)\ohz \frakg^{\ot j}\Big)\ohz \frakg^{\ot k}
\\
&=&\az\big(\frakg^{\ot i}\ohz (x\ot y-y\ot x -[x,y]_{\frakg})\big)\ohz (\frakg^{\ot j}\ohz \az(\frakg^{\ot k}))
\quad (\text{by Hom-associativity})
\\
&=&\az\big(\frakg^{\ot i}\ohz (x\ot y-y\ot x -[x,y]_{\frakg})\big)\ohz \frakg^{\ot j+k}
\quad (\text{by Eqs.~(\mref{eq:alphbn}) and (\mref{eq:otsum})})
\\
&=&\Big(\az(\frakg^{\ot i})\ohz \az(x\ot y-y\ot x -[x,y]_{\frakg})\Big)\ohz \frakg^{\ot j+k}
\quad (\text{by Eq.~(\mref{eq:ohcom})})
\\
&=&\Big(\frakg^{\ot i}\ohz \big(\az(x)\ot \az(y)-\az(y)\ot \az(x) -[\az(x),\az(y)]_{\frakg}\big)\Big)\ohz \frakg^{\ot j+k}
\quad (\text{by Eqs.~(\mref{eq:alpcom})\,and \,(\mref{eq:alphbn})})
\end{eqnarray*}
which is contained in
$\sum_{s,t\in \frakg} \big(\frakg^{\ot i}\ohz (s\ot t-t\ot s-[s,t]_{\frakg}) \big)\ohz \frakg^{\ot j+k})$ since $\az(\frakg)=\frakg$. Thus the right hand side of Eq.~(\mref{eq:Ideal-Ic}) is closed  under the right multiplication. A similar argument shows that it is also closed under the left multiplication.

By Eqs.~(\mref{eq:ohcom}), (\mref{eq:alpcom})\,and\,~(\mref{eq:alphbn}), we get
$$
\az((\frakg^{\ot i}\ohz (x\ot y-y\ot x [x,y]_{\frakg}))\ohz \frakg^{\ot j})
=(\frakg^{\ot i}\ohz(\az(x)\ot \az(y)-\az(y)\ot \az(x)-[\az(x),\az(y)]_{\frakg}))\ohz\frakg^{\ot j}$$
which is contained in $\sum_{s,t\in \frakg} (\frakg^{\ot i}\ohz (s\ot t-t\ot s -[s,t]_{\frakg}))\ohz \frakg^{\ot j}.$
So the right hand side of Eq.~(\mref{eq:Ideal-Ic}) is a Hom-ideal of $(T^+_h(\frakg),\ohz,\az)$ containing the elements of the form $x\ot y-y\ot x-[x,y]_{\frakg}, x,y\in \frakg$. Hence it contains the left hand side.
\smallskip

\noindent
(\mref{it:ncpbw1})
We first prove that $\varphi(I_{\frakg,\beta})$ is contained in $J_{\frakg,\beta}$. By Item~(\mref{it:ncpbw0}), we just need to verify that, for arbitrary $\fraka:=a_1\ot \cdots\ot a_i\in \frakg^{\ot i}$, $\frakb:=b_1\ot \cdots\ot
b_j\in\frakg^{\ot j}, i, j\geq 0$, and $x, y\in \frakg$, the element
$\varphi\big((\fraka \ohz(x\ot y-y\ot x-[x,y]_\frakg))\ohz \frakb\big)$ is contained in $J_{\frakg,\beta}$.

By the definition of $\ohz$ in Eqs.~(\mref{eq:ohini}) and (\mref{eq:ohrec}), we get
\begin{eqnarray}
&&\bigg(\fraka\ohz (x\ot y-y\ot x-[x,y]_\frakg)\bigg)\ohz \frakb\nonumber\\
&=&\bigg(\az(\fraka)\ot\Big(x\ot \az(y)-y\ot \az(x)\Big)-\fraka\ot [x,y]_\frakg\bigg)\ohz \frakb\nonumber\\
&=&\az^{j-1}\bigg(\az(\fraka)\ot\Big(x\ot \az(y)-y\ot \az(x)\Big)\bigg)\ot b_1\ot \az(b_2\ot \cdots \ot b_j)\nonumber\\
&&-\az^{j-1}\big(\fraka\ot [x,y]_\frakg\big)\ot b_1\ot \az(b_2\ot \cdots \ot b_j).\nonumber
\mlabel{eq:lefthom}
\end{eqnarray}
The two terms share the same right factor $w:=b_1\ot \az(b_2\ot \cdots \ot b_j)$ but the lengths of their left factors differ by one. So by Lemma~\mref{lem:phi}(\mref{it:phi3}), there exists $\frakc \in \frakg^{\ot i}$ such that
\begin{eqnarray*}
&&\varphi\bigg(\Big(\fraka\ohz(x\ot y-y\ot x-[x,y]_\frakg)\Big)\ohz \frakb\bigg)\\
&=& \varphi\bigg(\az^{j-1}\Big(\az(\fraka)\ot(x\ot \az(y)-y\ot \az(x))\Big)\bigg) \ot \frakc
-  \varphi\bigg( \az^{j-1}\Big(\fraka\ot[x,y]_\frakg\Big)\bigg) \ot \az(\frakc)\\
&=& \varphi\bigg(\az^{j}(\fraka)\ot\Big(\az^{j-1}(x)\ot \az^j(y)-\az^{j-1}(y)\ot \az^j(x)\Big)\bigg) \ot \frakc\\
&& -  \varphi\bigg( \az^{j-1}(\fraka)\ot[\az^{j-1}(x),\az^{j-1}(y)]_\frakg\bigg) \ot \az(\frakc) \quad (\text{by Eqs.~(\mref{eq:alpcom}) and (\mref{eq:hommult})})\\
&=& \varphi(\az^{j}(\fraka))\ot\Big(\az^{i-1}(\az^{j-1}(x))\ot \az^i(\az^j(y))-\az^{i-1}(\az^{j-1}(y))\ot \az^i(\az^j(x))\Big) \ot \frakc\\
&& -  \varphi( \az^{j-1}(\fraka))\ot\az^{i-1}([\az^{j-1}(x),\az^{j-1}(y)]_\frakg) \ot \az(\frakc) \quad (\text{by Lemma~\mref{lem:phi}(\mref{it:phi3p})})\\
&=& \varphi(\az^j(\fraka))\ot\Big(\az^{i+j}(x)\ot \az^{i+j}(y)-\az^{i+j}(y)\ot \az^{i+j}(x)\Big) \ot \frakc\\
&& -  \az(\varphi( \az^j(\fraka)))\ot[\az^{i+j}(x),\az^{i+j}(y)]_\frakg \ot \az(\frakc) \quad (\text{by Lemma~\mref{lem:phi}(\mref{it:phi2}), Eqs.~(\mref{eq:invol}) and (\mref{eq:hommult})}).
\end{eqnarray*}
This is an element in
$\sum_{\substack{\fraku\in \frakg^{\ot i}\\\frakv\in \frakg^{\ot j}}} \sum_{s,t\in \frakg}   \fraku\ot(s\ot t-t\ot s )\ot \frakv - \az(\fraku)\ot [s,t]_\frakg \ot \az(\frakv)$
by taking $\fraku:=\varphi(\az^j(\fraka)), \frakv:=\frakc, s:=\az^{i+j}(x)$ and $t:=\az^{i+j}(y)$. Thus $\varphi(I_{\frakg,\beta})$ is contained in $J_{\frakg,\beta}$. On the other hand, since $\varphi$ and $\az$ are bijective. The above argument shows that any term $\fraku\ot(s\ot t-t\ot s )\ot \frakv - \az(\fraku)\ot [s,t]_\frakg \ot \az(\frakv)$ in the previous sum can be expressed in the form $\varphi\bigg(\Big(\fraka\ohz(x\ot y-y\ot x-[x,y]_\frakg)\Big)\ohz \frakb\bigg)$. Thus $\varphi$ is surjective. This completes the proof.
\end{proof}

\subsection{A parameterized PBW decomposition}
\mlabel{ss:parpbw}

In this section, we take $\frakg$ to be an  involutive Hom-Lie algebra with an ordered basis $X$. So $X=\{x_i~|~i\in \omegaup\}$ for a linearly ordered set $\omegaup$.

\begin{theorem} $($=Theorem~\mref{thm:genpbw}$)$
Let $(\frakg,[\,,\,]_\frakg,\beta_\frakg)$ be an involutive Hom-Lie algebra such that $\beta_\frakg(X)=X$. Let $W$ be as defined in Eq.~$($\mref{eq:pbwbasis}$)$. Let $\mu\in \bfk$ be given.
Denote
\begin{equation}
J_{\frakg,\beta,\mu}:=
\sum_{i,j\geq0}\sum_{\substack{\fraka\in \frakg^{\ot i}\\\frakb\in \frakg^{\ot j}}} \sum_{x,y\in \frakg}  \bigg( \fraka\ot\Big((x\ot y-y\ot x )\ot \frakb\Big) - \mu^{i+j}\az(\fraka)\ot [x,y]_\frakg \ot \az(\frakb)\bigg).
\mlabel{eq:jgbeta1}
\end{equation}
Then we have the linear decomposition
\begin{equation}
T^+_h(\frakg)=J_{\frakg,\beta,\mu}\oplus \bfk W.
\end{equation}
\mlabel{thm:genpbw1}
\end{theorem}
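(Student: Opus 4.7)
The plan is to establish the two inclusions $T^+_h(\frakg)\subseteq J_{\frakg,\beta,\mu}+\bfk W$ and $J_{\frakg,\beta,\mu}\cap \bfk W=\{0\}$ separately, and then read off the claimed direct sum. The hypothesis $\beta_\frakg(X)=X$ is crucial throughout: it guarantees that $\az$ permutes each tensor basis $X^{\ot n}$ of $\frakg^{\ot n}$, so that the correction term $\mu^{i+j}\az(\fraka)\ot [x,y]_\frakg\ot \az(\frakb)$ remains a linear combination of pure basis tensors after expanding $[x,y]_\frakg$ in $X$. This keeps the whole argument inside the $\bfk$-span of $\bigsqcup_{n\geq 1}X^{\ot n}$.

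For the spanning inclusion, I would induct on the pair $(n,r(\fraka))$ ordered lexicographically with $n$ dominant, where $n$ is the length of a pure basis tensor $\fraka=x_{i_1}\ot\cdots\ot x_{i_n}$ and $r(\fraka):=|\{(k,l)\mid k<l,\ i_k<i_l\}|$ counts all inversions relative to the descending order defining $W$. A tensor with $r(\fraka)=0$ lies in $W$. Otherwise I pick an adjacent inversion $i_k<i_{k+1}$ and apply the defining relation of $J_{\frakg,\beta,\mu}$,
\begin{equation*}
\fraka'\ot x_{i_k}\ot x_{i_{k+1}}\ot \fraka''\equiv\fraka'\ot x_{i_{k+1}}\ot x_{i_k}\ot \fraka''+\mu^{(k-1)+(n-k-1)}\az(\fraka')\ot[x_{i_k},x_{i_{k+1}}]_\frakg\ot\az(\fraka'')\pmod{J_{\frakg,\beta,\mu}},
\end{equation*}
where the first summand has strictly smaller $r$ at the same length $n$, while the second has length $n-1$ after expanding the bracket in $X$. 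Induction closes the spanning step.

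For the independence inclusion, I would follow the classical PBW strategy and construct a $\bfk$-linear projection $\rho:T^+_h(\frakg)\to \bfk W$ that restricts to the identity on $\bfk W$ and annihilates $J_{\frakg,\beta,\mu}$. Such a $\rho$ is built inductively: on $\frakg\ot \bfk W$ it pushes a new generator $y\in X$ past an ordered word $w\in W$ by iterated commutation, at each step picking up the $\mu$- and $\az$-weighted bracket correction; the definition extends to arbitrary pure tensors by induction on length. Since $\rho$ fixes $W$, it follows that $J_{\frakg,\beta,\mu}\cap \bfk W=\{0\}$ once $\rho$ is shown to vanish on the generators of $J_{\frakg,\beta,\mu}$.

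The main obstacle is the cubic consistency check required for well-definedness of $\rho$: for a tensor $x_i\ot x_j\ot x_k\ot w$ admitting two natural reduction orders, the two outputs in $\bfk W$ must coincide. Here one invokes the Hom-Jacobi identity $[\az(x),[y,z]_\frakg]_\frakg+[\az(y),[z,x]_\frakg]_\frakg+[\az(z),[x,y]_\frakg]_\frakg=0$ together with the multiplicativity $\az([x,y]_\frakg)=[\az(x),\az(y)]_\frakg$ and the involutivity $\az^2=\id$. The parameter $\mu$ enters all three cyclic summands with identical exponents (since the surrounding contexts share their left and right lengths), so the Hom-Jacobi cancellation operates uniformly in $\mu$. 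The principal technical burden is the careful bookkeeping of iterated $\az$-applications on the contexts and on the bracket inputs; this is kept tractable precisely by $\az^2=\id$, which collapses higher powers of $\az$ to $\az$ or $\id$ and makes the matching of terms transparent.
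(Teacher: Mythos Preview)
Your proposal is correct and follows essentially the same route as the paper (which explicitly cites Varadarajan): a lexicographic induction on $(\text{length},\text{inversion count})$ for the spanning part, and a straightening operator---called $L$ in the paper, your $\rho$---whose well-definedness is the heart of the independence argument, with the cubic overlap resolved by the Hom-Jacobi identity exactly as you indicate. The one point you skate over is that, besides the cubic overlap, the paper also checks the \emph{disjoint} overlap (two swaps at positions $r,\ell$ with $|r-\ell|\geq 2$); in the Hom setting this is not entirely trivial, because the correction terms carry $\az$-twisted basis elements $\az(x_{i_r}),\az(x_{i_{r+1}})$ whose relative order in $X$ may differ from that of $x_{i_r},x_{i_{r+1}}$, so the paper splits into sub-cases according to these orderings before the double-bracket terms cancel by skew-symmetry.
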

We note that the sum in Eq.~(\mref{eq:jgbeta1}) remains the same when $x, y$ in the sum are taken from $X$.
\begin{proof}
We follow the proof of the classical PBW Theorem as presented in~\cite{Var}. We  need to prove that
$$T^+_h(\frakg)=J_{\frakg,\beta,\mu}+\bfk W \quad \text{and}\quad J_{\frakg,\beta,\mu}\bigcap \bfk W=0.$$

We first introduce some notations. Let~$n\geq 2$. Let $\frakx:=x_{i_1}\ot x_{i_2}\ot \cdots \ot x_{i_n}\in X^{\ot n}\subseteq \frakg^{\ot n}$. Define the index of $\frakx$ to be
$$d:=\ind(\frakx):=\big|~\{(r,s)~|~r<s~\text{and}~ i_r<i_s, 1\leq r,s\leq n\}~\big|. $$
Let~$ \frakg_{n,d}$ be the linear span of all pure tensors $\frakx$ of degree $n$ and index $d$. Then we obtain
\begin{equation}
\frakg^{\ot n}=\bigoplus_{d\geq 0}\frakg_{n,d}.
\mlabel{eq:gdn}
\end{equation}
In particular, $\frakg_{n,0}=\bfk W^{(n)}$, where $W^{(n)}:=\{x_{i_1}\ot x_{i_2}\ot \cdots\ot x_{i_n}\in X^{\ot n}\,|\, i_1\geq i_2\geq \cdots\geq i_n\}$.

To prove $T^+_h(\frakg)= J_{\frakg,\beta,\mu} +\bfk W$, by the equation ~$\bfk W=\sum_{i\geq 1}\bfk W^{(i)}$, we just need to prove that
\begin{equation}
\frakg^{\ot n}\subseteq J_{\frakg,\beta,\mu} + \sum_{1\leq q\leq n}\bfk W^{(q)}
\mlabel{eq:sum}
\end{equation}
by induction on $n\geq 1$.
For $n=1$, we have $\bfk W^{(1)}=\frakg$. So $\frakg\subseteq J_{\frakg,\beta,\mu} +\bfk W^{(1)}$.  Assume that Eq.~(\mref{eq:sum}) has been proved for $n\geq 1$.
Since $\frakg^{\ot (n+1)}=\sum_{d\geq 0}\frakg_{n+1,d}$, we just need to prove that
$$\frakg_{n+1,d}\subseteq J_{\frakg,\beta,\mu}+\sum_{1\leq q\leq n+1}\bfk W^{(q)}, \quad\text{for all}\,\, d\geq 0.$$
We accomplish this by induction on ~$d\geq 0$. For $d=0$, we have $\frakg_{n+1,0}=\bfk W^{(n+1)}$. Assume that, for $e\geq 0$, ~$\frakg_{n+1,e}\subseteq J_{\frakg,\beta,\mu} +\sum_{1\leq q\leq n+1} \bfk W^{(q)}$ has been proved.
Let $\frakx=x_{i_1}\ot x_{i_2}\ot\cdots\ot x_{i_{n+1}}\in X^{\ot (n+1)}\cap\frakg_{n+1,e+1}.$ Since $e+1\geq 1$, we can choose an integer $1\leq r\leq n$ such that $i_r<i_{r+1}$.  Define $\frakx'=x_{i_1}\ot \cdots
\ot x_{i_{r+1}}\ot x_{i_r}\ot\cdots \ot x_{i_{n+1}}$ to be the pure tensor formed by interchanging  $x_{i_r}$ with $x_{i_{r+1}}$ in $\frakd$. Then $\frakx'\in \frakg_{n+1,e}\subseteq J_{\frakg,\beta,\mu}+\sum_{1\leq q\leq n+1}\bfk W^{(q)}$. Since the definition of $J_{\frakg,\beta,\mu}$ gives
$$\frakx-\frakx'\equiv \mu^{n-1}\az(x_{i_1}\ot \cdots \ot x_{i_{r-1}})\ot [x_{i_r},x_{i_{r+1}}]_\frakg\ot \az(x_{i_{r+2}}\ot \cdots\ot x_{i_{n+1}})(\mathrm{mod}\, J_{\frakg,\beta,\mu}),$$
by the induction hypothesis on $n$, we have $\frakx\in J_{\frakg,\beta,\mu}+\sum_{1\leq q\leq n+1}\bfk W^{(q)}+\sum_{1\leq q\leq n}\bfk W^{(q)}$. So $\frakx$ is in $J_{\frakg,\beta,\mu}+\sum_{1\leq q\leq n+1}\bfk W^{(q)}$. This proves that $\frakg_{n+1,e+1}\subseteq J_{\frakg,\beta,\mu}+\sum_{1\leq q\leq n+1}\bfk W^{(q)}$. Together with the induction hypothesis on $d$, we have $\frakg^{\ot n+1}\subseteq J_{\frakg,\beta,\mu}+\sum_{1\leq q\leq n+1}\bfk W^{(q)}$. This completes the induction on $n$
\smallskip

We next prove that $J_{\frakg,\beta,\mu}\bigcap \bfk W=0$. We achieve this by constructing an operator ~$L$ on $T^+_h(\frakg)$ such that
\begin{equation}
\left\{\begin{array}{llll}
(1)& L(t)=t \,\,\text{for all}\, t \in W;&\\
(2)&\text{if}\, p \geq 2, 1\leq s\leq p-1, \text{and}\, i_s<i_{s+1}, \text{then}
&\\
&L(x_{i_1}\ot \cdots \ot x_{i_s}\ot x_{i_{s+1}}\ot \cdots \ot x_{i_p})=L(x_{i_1}\ot \cdots \ot x_{i_{s+1}}\ot x_{i_s}\ot \cdots \ot x_{i_p})&\\
&+L(\mu^{p-2}\az(x_{i_1}\ot \cdots\ot x_{s-1})\ot[ x_{i_s}, x_{i_{s+1}}]_\frakg\ot\az(x_{i_{s+2}}\ot \cdots \ot x_{i_p})).&
\end{array}\right.
\mlabel{eq:defnL}
\end{equation}
We define $L$ on $\sum_{1\leq q\leq n}\frakg^{\ot q}$ by induction on $n\geq 1$. For $n=1$, we define $L$ to be the identity on $\frakg$. Suppose that $n\geq 2$ and that $L$ is  an operator on ~$\sum_{1\leq q\leq n}\frakg^{\ot q}$ satisfying Eq.~(\mref{eq:defnL}) for all pure tensors of degree $n$. We will extend $L$ to an operator on $\sum_{1\leq q\leq n+1}\frakg^{\ot q}$ that satisfies Eq.~(\mref{eq:defnL}) for all pure tensors $\frakx= x_{i_1}\ot \cdots\ot x_{i_{n+1}}\in X^{\ot (n+1)}\subseteq \frakg^{\ot (n+1)}(=\sum_{d\geq 0}\frakg_{n+1,d})$.  For this we by induction on $d:=\ind(\frakx)$. For $d=0$, we define $L(\frakx)=\frakx$. Assume that $L(\frakx)$ has been defined for $\frakx\in \sum_{1\leq p\leq e}\frakg_{n+1,p}$, where $e\geq 0$. Consider $\frakx\in \frakg_{n+1,e+1}$. Choose an integer $r$, $1\leq r\leq n$, such that $i_r<i_{r+1}$. Then we define
$$L(\frakx):=L(x_{i_1}\ot \cdots\ot x_{i_{r+1}}\ot x_{i_r}\ot \cdots\ot x_{i_{n+1}})+L(\mu^{n-1}\az(x_{i_1}\ot \cdots \ot x_{i_{r-1}})\ot [x_{i_r},x_{i_{r+1}}]_\frakg\ot \az(x_{i_{r+2}}\ot \cdots\ot x_{i_{n+1}})).$$

We next verify that $L(\frakx)$ is well-defined, independent of the choice of the integer $1\leq r\leq n$ such that $i_r<i_{r+1}$. For this purpose, let $\ell$ be another integer, ~$1\leq \ell\leq n$, with $i_{\ell}<i_{\ell+1}$. Let
$$u:=L(x_{i_1}\ot \cdots\ot x_{i_{r+1}}\ot x_{i_r}\ot \cdots\ot x_{i_{n+1}})+L(\mu^{n-1}\az(x_{i_1}\ot \cdots \ot x_{i_{r-1}})\ot [x_{i_r},x_{i_{r+1}}]_\frakg\ot \az(x_{i_{r+2}}\ot \cdots\ot x_{i_{n+1}}))$$
and
$$v:=L(x_{i_1}\ot \cdots\ot x_{i_{\ell+1}}\ot x_{i_\ell}\ot \cdots\ot x_{i_{n+1}})+L(\mu^{n-1}\az(x_{i_1}\ot \cdots \ot x_{i_{\ell-1}})\ot [x_{i_\ell},x_{i_{\ell+1}}]_\frakg\ot \az(x_{i_{\ell+2}}\ot \cdots\ot x_{i_{n+1}})).$$
Then by the induction hypothesis, $u,v\in\sum_{0\leq p\leq e}\frakg_{n+1,p}+\sum_{1\leq q\leq n}\frakg^{\ot q}$ and satisfy Eq.~(\mref{eq:defnL}). We need to check ~$u=v$.
We distinguish two cases.
\smallskip

\noindent
{\bf Case 1: $|r-\ell|\geq 2$. } Then $r-\ell\geq 2$ or $\ell-r\geq 2$, and so $n\geq 3$.  Without lose of generality, we suppose $\ell-r \geq 2$. Since $u,v \in \sum_{0\leq p\leq e}\frakg_{n+1,p}+\sum_{1\leq q\leq n}\frakg^{\ot q}$, by the induction hypothesis, we have
\begin{eqnarray*}
u&=&L(x_{i_1}\ot \cdots\ot x_{i_{r+1}}\ot x_{i_r}\ot \cdots\ot x_{i_\ell}\ot x_{i_{\ell+1}}\ot \cdots\ot x_{i_{n+1}})\\
&&+L(\mu^{n-1}\az(x_{i_1}\ot \cdots \ot x_{i_{r-1}})\ot [x_{i_r},x_{i_{r+1}}]_\frakg\ot \az(\cdots\ot x_{i_\ell}\ot x_{i_{\ell+1}}\ot \cdots\ot x_{i_{n+1}}))\\
&=&L(x_{i_1}\ot \cdots\ot x_{i_{r+1}}\ot x_{i_r}\ot \cdots\ot x_{i_{\ell+1}}\ot x_{i_\ell}\ot \cdots\ot x_{i_{n+1}})\\
&&+L(\mu^{n-1}\az(x_{i_1}\ot \cdots\ot x_{i_{r+1}}\ot x_{i_r}\ot \cdots)\ot [x_{i_\ell}, x_{i_{\ell+1}}]_\frakg\ot \az(\cdots\ot x_{i_{n+1}}))\\
&&+L(\mu^{n-1}\az(x_{i_1}\ot \cdots )\ot [x_{i_r},x_{i_{r+1}}]_\frakg\ot \az(\cdots\ot x_{i_\ell}\ot x_{i_{\ell+1}}\ot \cdots\ot x_{i_{n+1}}))
\end{eqnarray*}
and
\begin{eqnarray*}
v&=&L(x_{i_1}\ot \cdots\ot x_{i_{r}}\ot x_{i_{r+l}}\ot \cdots\ot x_{i_{\ell+1}}\ot x_{i_\ell}\ot \cdots\ot x_{i_{n+1}})\\
&&+L(\mu^{n-1}\az(x_{i_1}\ot \cdots\ot x_{i_{r}}\ot x_{i_{r+l}}\ot\cdots )\ot [x_{i_\ell},x_{i_{\ell+1}}]_\frakg\ot \az(x_{i_{r+2}}\ot\cdots\ot x_{i_{n+1}}))\\
&=&L(x_{i_1}\ot \cdots\ot x_{i_{r+1}}\ot x_{i_r}\ot \cdots\ot x_{i_{\ell+1}}\ot x_{i_\ell}\ot \cdots\ot x_{i_{n+1}})\\
&&+L(\mu^{n-1}\az(x_{i_1}\ot \cdots)\ot [x_{i_{r}}, x_{i_{r+l}}]_\frakg\ot \az(\cdots\ot x_{i_{\ell+1}}\ot x_{i_\ell}\ot \cdots\ot x_{i_{n+1}}))\\
&&+L(\mu^{n-1}\az(x_{i_1}\ot \cdots\ot x_{i_{r}}\ot x_{i_{r+l}}\cdots )\ot [x_{i_\ell},x_{i_{\ell+1}}]_\frakg\ot \az( \cdots\ot x_{i_{n+1}}))
\end{eqnarray*}
Moreover,  since $\beta_\frakg(X)=X$ and $x_{i_r}\neq x_{i_{r+1}}$, $x_{i_\ell}\neq x_{i_{\ell+1}}$, we have $\az(x_{i_r}), \az(x_{i_{r+1}}), \az(x_{i_\ell}), \az(x_{i_{\ell+1}})\in X$ and $\az(x_{i_r})\neq \az(x_{i_{r+1}})$, $\az(x_{i_\ell})\neq \az(x_{i_{\ell+1}})$. Depending on whether or not $\az(x_{i_r})>\az(x_{i_{r+1}})$ and $\az(x_{i_\ell})>\az(x_{i_{\ell+1}})$, there are four cases  to consider. We just consider the case of
$\az(x_{i_r})>\az(x_{i_{r+1}})$ and $\az(x_{i_\ell})< \az(x_{\ell+1})$. The other cases are similar.  Then by the induction hypothesis, we obtain
\begin{eqnarray*}
&&L(\mu^{n-1}\az(x_{i_1}\ot \cdots\ot x_{i_{r+1}}\ot x_{i_r}\ot \cdots)\ot [x_{i_\ell}, x_{i_{\ell+1}}]_\frakg\ot \az(\cdots\ot x_{i_{n+1}}))\\
&=&L(\mu^{n-1}\az(x_{i_1}\ot\cdots\ot x_{i_r}\ot x_{i_{r+1}}\ot \cdots )\ot [x_{i_\ell}, x_{i_{\ell+1}}]_\frakg\ot \az(\cdots\ot x_{i_{n+1}}))\\
&&+L(\mu^{2n-3}(x_{i_1}\ot \cdots\ot [\az(x_{i_{r+1}}), \az(x_{i_r})]_\frakg\ot \cdots\ot [\az(x_{i_\ell}), \az(x_{i_{\ell+1}})]_\frakg\ot \cdots\ot x_{i_{n+1}})).
\end{eqnarray*}
and
\begin{eqnarray*}
&&L(\mu^{n-1}\az(x_{i_1}\ot \cdots )\ot [x_{i_r},x_{i_{r+1}}]_\frakg\ot \az(\cdots\ot x_{i_\ell}\ot x_{i_{\ell+1}}\ot \cdots\ot x_{i_{n+1}}))\\
&=&L(\mu^{n-1}\az(x_{i_1}\ot \cdots )\ot [x_{i_r},x_{i_{r+1}}]_\frakg\ot \az(\cdots\ot x_{i_{\ell+1}}\ot x_{i_\ell}\ot \cdots\ot x_{i_{n+1}}))\\
&&+L(\mu^{2n-3}(x_{i_1}\ot \cdots \ot [\az(x_{i_r}),\az(x_{i_{r+1}})]_\frakg\ot \cdots\ot[\az( x_{i_\ell}),\az( x_{i_{\ell+1}})]_\frakg\ot \cdots\ot x_{i_{n+1}})).
\end{eqnarray*}
Plugging these into the previous expressions of $u$, we get
\begin{eqnarray*}
u&=&L(x_{i_1}\ot \cdots\ot x_{i_{r+1}}\ot x_{i_r}\ot \cdots\ot x_{i_{\ell+1}}\ot x_{i_\ell}\ot \cdots\ot x_{i_{n+1}})\\
&&+L(\mu^{n-1}\az(x_{i_1}\ot\cdots\ot x_{i_r}\ot x_{i_{r+1}}\ot \cdots )\ot [x_{i_\ell}, x_{i_{\ell+1}}]_\frakg\ot \az(\cdots\ot x_{i_{n+1}}))\\
&&+L(\mu^{n-1}\az(x_{i_1}\ot \cdots )\ot [x_{i_r},x_{i_{r+1}}]_\frakg\ot \az(\cdots\ot x_{i_{\ell+1}}\ot x_{i_\ell}\ot \cdots\ot x_{i_{n+1}}))\\
&&+L(\mu^{2n-3}(x_{i_1}\ot \cdots\ot [\az(x_{i_{r+1}}), \az(x_{i_r})]_\frakg\ot \cdots\ot [\az(x_{i_\ell}), \az(x_{i_{\ell+1}})]_\frakg\ot \cdots\ot x_{i_{n+1}}))\\
&&+L(\mu^{2n-3}(x_{i_1}\ot \cdots \ot [\az(x_{i_r}),\az(x_{i_{r+1}})]_\frakg\ot \cdots\ot[\az( x_{i_\ell}),\az( x_{i_{\ell+1}})]_\frakg\ot \cdots\ot x_{i_{n+1}}))\\
&=&L(x_{i_1}\ot \cdots\ot x_{i_{r+1}}\ot x_{i_r}\ot \cdots\ot x_{i_{\ell+1}}\ot x_{i_\ell}\ot \cdots\ot x_{i_{n+1}})\\
&&+L(\mu^{n-1}\az(x_{i_1}\ot\cdots\ot x_{i_r}\ot x_{i_{r+1}}\ot \cdots )\ot [x_{i_\ell}, x_{i_{\ell+1}}]_\frakg\ot \az(\cdots\ot x_{i_{n+1}}))\\
&&+L(\mu^{n-1}\az(x_{i_1}\ot \cdots )\ot [x_{i_r},x_{i_{r+1}}]_\frakg\ot \az(\cdots\ot x_{i_{\ell+1}}\ot x_{i_\ell}\ot \cdots\ot x_{i_{n+1}}))\\
&& \qquad \qquad\quad(\text{by skew-symmetry}\,[x_{i_r}, x_{i_{r+1}}]_\frakg=- [x_{i_{r+1}},x_{i_r}]_\frakg)\\
&=&v.
\end{eqnarray*}

\noindent
{\bf Case 2: $|r-\ell|=1$.}  Without loss of generality, we assume $\ell=r+1$. Then $i_r<i_{r+1}<i_{r+2}$. By the induction hypothesis, we obtain
\begin{eqnarray*}
u&=& L(x_{i_1}\ot \cdots\ot x_{i_{r+1}}\ot x_{i_r}\ot x_{i_{r+2}}\ot \cdots\ot x_{i_{n+1}})\\
&&+L(\mu^{n-1}\az(x_{i_1}\ot \cdots \ot x_{i_{r-1}})\ot [x_{i_r},x_{i_{r+1}}]_\frakg\ot \az(x_{i_{r+2}}\ot \cdots\ot x_{i_{n+1}}))\\
&=& L(x_{i_1}\ot \cdots\ot x_{i_{r+1}}\ot x_{i_{r+2}}\ot x_{i_r}\ot \cdots\ot x_{i_{n+1}})\\
&&+ L(\mu^{n-1}\az(x_{i_1}\ot \cdots\ot x_{i_{r+1}})\ot [x_{i_r}, x_{i_{r+2}}]_\frakg\ot\az( \cdots\ot x_{i_{n+1}}))\\
&&+L(\mu^{n-1}\az(x_{i_1}\ot \cdots \ot x_{i_{r-1}})\ot [x_{i_r},x_{i_{r+1}}]_\frakg\ot \az(x_{i_{r+2}}\ot \cdots\ot x_{i_{n+1}}))\\
&=& L(x_{i_1}\ot \cdots\ot x_{i_{r+2}}\ot x_{i_{r+1}}\ot x_{i_r}\ot \cdots\ot x_{i_{n+1}})\\
&&+L(\mu^{n-1}\az(x_{i_1}\ot \cdots)\ot [x_{i_{r+1}},x_{i_{r+2}}]_\frakg\ot \az(x_{i_r}\ot \cdots\ot x_{i_{n+1}}))\\
&&+ L(\mu^{n-1}\az(x_{i_1}\ot \cdots\ot x_{i_{r+1}})\ot [x_{i_r}, x_{i_{r+2}}]_\frakg\ot\az( \cdots\ot x_{i_{n+1}}))\\
&&+L(\mu^{n-1}\az(x_{i_1}\ot \cdots \ot x_{i_{r-1}})\ot [x_{i_r},x_{i_{r+1}}]_\frakg\ot \az(x_{i_{r+2}}\ot \cdots\ot x_{i_{n+1}}).
\end{eqnarray*}
and
\begin{eqnarray*}
v&=&L(x_{i_1}\ot \cdots\ot x_{i_r}\ot x_{i_{r+2}}\ot x_{i_r+1}\ot \cdots\ot x_{i_{n+1}})\\
&&+L(\mu^{n-1}\az(x_{i_1}\ot \cdots \ot x_{i_r})\ot [x_{i_r+1},x_{i_{r+2}}]_\frakg\ot \az( \cdots\ot x_{i_{n+1}}))\\
&=&L(x_{i_1}\ot \cdots\ot x_{i_{r+2}}\ot x_{i_r}\ot x_{i_r+1}\ot \cdots\ot x_{i_{n+1}})\\
&&+L(\mu^{n-1}\az(x_{i_1}\ot \cdots)\ot [x_{i_r}, x_{i_{r+2}}]_\frakg\ot \az(x_{i_r+1}\ot \cdots\ot x_{i_{n+1}}))\\
&&+L(\mu^{n-1}\az(x_{i_1}\ot \cdots \ot x_{i_r})\ot [x_{i_r+1},x_{i_{r+2}}]_\frakg\ot \az( \cdots\ot x_{i_{n+1}}))\\
&=&L(x_{i_1}\ot \cdots\ot x_{i_{r+2}}\ot x_{i_r+1}\ot x_{i_r}\ot \cdots\ot x_{i_{n+1}})\\
&&+L(\mu^{n-1}\az(x_{i_1}\ot \cdots\ot x_{i_{r+2}})\ot [x_{i_r}, x_{i_r+1}]_\frakg\ot \az(\cdots\ot x_{i_{n+1}}))\\
&&+L(\mu^{n-1}\az(x_{i_1}\ot \cdots)\ot [x_{i_r}, x_{i_{r+2}}]_\frakg\ot \az(x_{i_r+1}\ot \cdots\ot x_{i_{n+1}}))\\
&&+L(\mu^{n-1}\az(x_{i_1}\ot \cdots \ot x_{i_r})\ot [x_{i_r+1},x_{i_{r+2}}]_\frakg\ot \az(\cdots\ot x_{i_{n+1}})).
\end{eqnarray*}
By the induction hypothesis, for any $x<y$, $t_1\in \frakg^{\ot m}, t_2\in \frakg^{\ot k}, m+k=n-2$, we have
$$L(t_1\ot x\ot y\ot t_2)-L(t_1\ot y\ot x \ot t_2)=L(\mu^{m+k}\az(t_1)\ot [x,y]\ot \az(t_2)).$$
So the sum of the last three terms of the previous expression of $u$ is
\begin{eqnarray*}
&&L(\mu^{n-1}\az(x_{i_1}\ot \cdots)\ot [x_{i_{r+1}},x_{i_{r+2}}]_\frakg\ot \az(x_{i_r}\ot \cdots\ot x_{i_{n+1}}))\\
&&+ L(\mu^{n-1}\az(x_{i_1}\ot \cdots\ot x_{i_{r+1}})\ot [x_{i_r}, x_{i_{r+2}}]_\frakg\ot\az( \cdots\ot x_{i_{n+1}}))\\
&&+L(\mu^{n-1}\az(x_{i_1}\ot \cdots \ot x_{i_{r-1}})\ot [x_{i_r},x_{i_{r+1}}]_\frakg\ot \az(x_{i_{r+2}}\ot \cdots\ot x_{i_{n+1}})\\
&=&L(\mu^{n-1}\az(x_{i_1}\ot \cdots)\ot \az(x_{i_r})\ot [x_{i_{r+1}},x_{i_{r+2}}]_\frakg\ot \az(\cdots\ot x_{i_{n+1}}))\\
&&+L(\mu^{2n-3}(x_{i_1}\ot \cdots\ot [[x_{i_{r+1}},x_{i_{r+2}}]_\frakg,\az(x_{i_r})]_\frakg\ot \cdots\ot x_{i_{n+1}}))\\
&&+ L(\mu^{n-1}\az(x_{i_1}\ot \cdots)\ot[x_{i_r}, x_{i_{r+2}}]_\frakg\ot \az(x_{i_{r+1}})\ot\az( \cdots\ot x_{i_{n+1}}))\\
&&+ L(\mu^{2n-3}x_{i_1}\ot \cdots\ot [\az(x_{i_{r+1}}), [x_{i_r}, x_{i_{r+2}}]_\frakg]_\frakg\ot\cdots\ot x_{i_{n+1}})\\
&&+L(\mu^{n-1}\az(x_{i_1}\ot \cdots \ot x_{i_{r-1}})\ot\az(x_{i_{r+2}})\ot [x_{i_r},x_{i_{r+1}}]_\frakg\ot\ot \az(\cdots\ot x_{i_{n+1}}))\\
&&+L(\mu^{2n-3}x_{i_1}\ot \cdots \ot x_{i_{r-1}}\ot [[x_{i_r},x_{i_{r+1}}]_\frakg,\az(x_{i_{r+2}})]_\frakg\ot \cdots\ot x_{i_{n+1}})\\
&=&L(\mu^{n-1}\az(x_{i_1}\ot \cdots\ot x_{i_r})\ot [x_{i_{r+1}},x_{i_{r+2}}]_\frakg\ot \az(\cdots\ot x_{i_{n+1}}))\\
&&+ L(\mu^{n-1}\az(x_{i_1}\ot \cdots)\ot[x_{i_r}, x_{i_{r+2}}]_\frakg\ot \az(x_{i_{r+1}}\ot \cdots\ot x_{i_{n+1}}))\\
&&+L(\mu^{n-1}\az(x_{i_1}\ot \cdots\ot x_{i_{r+2}})\ot [x_{i_r},x_{i_{r+1}}]_\frakg\ot\ot \az(\cdots\ot x_{i_{n+1}}))
\end{eqnarray*}
by the Hom-Jacobi identity. Thus
\begin{eqnarray*}
u&=&L(x_{i_1}\ot \cdots\ot x_{i_{r+2}}\ot x_{i_r+1}\ot x_{i_r}\ot \cdots\ot x_{i_{n+1}})\\
&&+L(\mu^{n-1}\az(x_{i_1}\ot \cdots\ot x_{i_r})\ot [x_{i_{r+1}},x_{i_{r+2}}]_\frakg\ot \az(\cdots\ot x_{i_{n+1}}))\\
&&+ L(\mu^{n-1}\az(x_{i_1}\ot \cdots)\ot[x_{i_r}, x_{i_{r+2}}]_\frakg\ot \az(x_{i_{r+1}}\ot \cdots\ot x_{i_{n+1}}))\\
&&+L(\mu^{n-1}\az(x_{i_1}\ot \cdots\ot x_{i_{r+2}})\ot [x_{i_r},x_{i_{r+1}}]_\frakg\ot\az(\cdots\ot x_{i_{n+1}}))\\
&=&v.
\end{eqnarray*}
Then we get $u=v$ in  both cases. So the assignment $L$ is a well-defined map.  Let $\frakx\in J_{\frakg,\beta}\bigcap \bfk W$. Then by the definition of $L$, we have $L(\frakx)=\frakx$ and $L(\frakx)=0$. Thus $\frakx=0$ and so
$J_{\frakg,\beta}\bigcap \bfk W=0$.

This completes the proof of Theorem~\mref{thm:genpbw1}.
\end{proof}

\noindent
{\bf Acknowledgments:} This work is supported by the National Natural Science Foundation of China (Grant No. 11071176, 11221101 and 11371178).

\end{document}